\newtheorem{Theorem}{Theorem}[section]
\newtheorem{Remark}[Theorem]{Remark}
\newtheorem{Lemma}[Theorem]{Lemma}
\newtheorem{Proposition}[Theorem]{Proposition}
\numberwithin{equation}{section}
\newcommand{\norm}[1]{\left\Vert#1\right\Vert}
\newcommand{\bb}[1]{\mathbb{#1}}
\newcommand{\pp}[1]{\textbf{\emph{#1}}}
\title{Almost Sure Existence of Global Weak Solutions to the 3D\\ Incompressible Navier-Stokes Equation}
\author{Jingrui Wang\footnotemark[1] \and Keyan Wang\footnotemark[2]}
\begin{document}
\maketitle
\begin{abstract}
In this paper we prove the almost sure existence of global weak solution to the 3D incompressible Navier-Stokes Equation for a set of large data in $\dot{H}^{-\alpha}(\mathbb{R}^{3})$ or $\dot{H}^{-\alpha}(\mathbb{T}^{3})$ with $0<\alpha\leq 1/2$. This is achieved by randomizing the initial data and showing that the energy of the solution modulus the linear part keeps finite for all $t\geq0$. Moreover, the energy of the solutions is also finite for all $t>0$. This improves the recent result of Nahmod, Pavlovi\'{c} and Staffilani on (SIMA, \cite{ASE}) in which $\alpha$ is restricted to $0<\alpha<\frac{1}{4}$.
\end{abstract}

\footnotetext[1]{School of Mathematical Sciences and Shanghai Center for Mathematical Sciences, Fudan University, Shanghai,  200433,  P. R. China. \textit{Email: 13110180044@fudan.edu.cn}}
\footnotetext[2]{Department of Mathematics, Shanghai Finance University, Shanghai, 201209, P. R. China. \textit{Email: wang.keyan@yahoo.com}}
\section{Introduction}
Let's recall the incompressible Navier-Stokes equations in the whole space $\bb{R}^{3}$ or the 3D torus $\bb{T}^{3}$,
\begin{equation}
\label{eq:navier}
\begin{cases}
\partial_{t}u + u\cdot\nabla u + \nabla p=\Delta u,\\
\nabla\cdot u=0,\\
u(x,0)=f(x).
\end{cases}
\end{equation}\\
Here $u$ is the velocity vector, $p$ is the scalar pressure and $f$ is the initial data which is divergence-free.

For divergence-free initial data $f \in L^2$, Leray proved the existence of global weak solutions to $(\ref{eq:navier})$ in his seminal work $\cite{LER}$. The result of Leray was then extended to the bounded domain case by Hopf $\cite{HOP}$. See also Lemarie-Rieusset \cite{DNP} for an extension to the uniform $L^2$ initial data. Very recently, for $0<\alpha<\frac{1}{4}$, Nahmod, Pavlovic and Staffilani $\cite{ASE}$ showed the almost sure existence of global weak solutions for supercritical periodic initial data in $\dot{H}^{-\alpha}(\bb{T}^{3})$ after suitable data randomization technique. The aim of this paper is to extend the interesting work in \cite{ASE} to the case when $0 < \alpha \leq \frac{1}{2}$ in the whole 3D space. \\
\indent Before presenting our main result, let us give a brief review on known results which are very related to ours. In $\cite{RDC}$, Burq and Tzvetkov proved the well-posedness of nonlinear wave equations below the critical threshold by randomizing the initial data in an appropriate way. The key point of their method consists in using the fact that though the initial data have low regularity, their Lebesgue integrability is almost surely better than expected. Such kind of $L^{p}$ regularization phenomena are in fact well-known since the work of Paley-Zygmund \cite{FOR}.

The method in \cite{RDC} was extensively applied to other PDE problems when the regularity of the initial data is below the critical threshold. For instance, Burq and Tzvetkov in \cite{RDS,PWP} studied the global existence of cubic semi-linear wave equations. Zhang and Fang $\cite{RCG}$ applied this approach for Navier-Stokes Equation and obtained the local smooth solutions for the randomized initial data in $L^{2}(\bb{T}^{3})$. Later on, they further relaxed the constraint on the initial data to $\dot{H}^{-1+}(\bb{R}^{3})$ in $\cite{WHO}$. We remark that the solution obtained in $\cite{WHO}$ is not in the energy space when $t>0$. Deng and Cui $\cite{PNS}$ proved the global existence of classical solutions for randomized small initial data in $L^{2}(\bb{T}^{3})$.

To state our main result, let us first introduce some basic notations and lay down the randomization set-up. Let's first denote
$$D=\sqrt{-\Delta}.$$
Starting with the work of Bourgain $\cite{IMS}$ and Burq and Tzvetkov $\cite{RDC}$, there are many results for probabilistic constructions on a compact manifold $M$, where there is a countable basis $\{e_{n}(x)\}$ in $L^{2}(M)$ consisting the eigenfunctions of the Laplace-Beltrami operator. We can use this sequence $\{e_{n}(x)\}|_{n=1}^{\infty}$ to introduce such a randomization: Given $D^{-\alpha}f(x)=\sum_{n}a_{n}e_{n}(x)$, where $f$ is in the space $\dot{H}^{-\alpha}$, one can define the randomization by
\begin{equation}
\label{eq:torus}
D^{-\alpha}f^{\omega}(x)=\sum_{n}a_{n}e_{n}(x)h_{n}(\omega),
\end{equation}
where $\{h_{n}(\omega)\}$ are a series of independent mean zero real random variables with identical Gaussian distribution on a probability space $(\Omega,\mathcal{A},\mathcal{P})$. \\

In the whole space case, we follow the way in Zhang and Fang $\cite{WHO}$, see also L\"{u}hrmann and Mendelson $\cite{DEN}$ and \'{A}. B\'{e}nyi, T. Oh and O. Pocovinicu $\cite{ONA,BEN}$, dividing the frequency space in accordance with the \textit{Wiener decomposition} instead of eigenfunctions. Let $n\in \bb{Z}^{3}$ and $Q_{n}$ be the unit cube $Q_{n}=n+(-\frac{1}{2},\frac{1}{2}]^{3}$. Thus we have
$$\bb{R}^{3}=\bigcup_{n}Q_{n}.$$
Note that $Q_{n}\cap Q_{m}=\emptyset$ if $n\neq m$, we have $\sum_{n}\chi_{Q_{n}}(\xi)=1$. Hence we get the decomposition
$$
f(x)=\sum_{n\in\bb{Z}^{3}}\mathcal{F}^{-1}(\chi_{Q_{n}}\widehat{f}).
$$
Here $\widehat{f}$ denotes the usual Fourier transformation of $f$ which is also denoted by $\mathcal{F}(f)$, and $\mathcal{F}^{-1}(g)$ denotes the inverse Fourier transformation of $g$.
This partition is commonly referred to as \textit{Wiener decomposition}. Moreover, as $\cite{ONA,BEN,DEN}$, we have a smooth version for the decomposition. Define a nonnegative and even function $\phi\in C_{c}^{\infty}(\bb{R}^{3})$, and
$$
\phi(\xi)=\begin{cases}
1,\quad \xi\in(-\frac{1}{2},\frac{1}{2})^{3}\\
0,\quad \xi\in([-1,1]^{3})^{c}.
\end{cases}
$$
Let
$$
\varphi(\xi)=\frac{\phi(\xi)}{\sum_{n\in\bb{Z}^{3}}\phi(\xi-n)}.
$$
We can see from the definition above that $\varphi$ is an even valued function and supp$\varphi\subset [-1,1]^{3}$. Especially, $\sum_{n}\varphi(\xi-n)=1$.
Define
\begin{equation}
\label{eq:decomposition}
\varphi(D-n)f = \int_{\bb{R}^{3}}\widehat{f}(\xi)\varphi(\xi-n)e^{2\pi ix\xi}d\xi
\end{equation}
Hence $f$ has such a decomposition
$$
f(x)=\sum_{n\in\bb{Z}^{3}}\varphi(D-n)f.
$$
We note that $\varphi$ is an even function, also with the fact that $\overline{\widehat{f}(\xi)}=\widehat{f}(-\xi)$. Then for any real-valued $f$ we have $$\overline{\varphi(D+n)f}=\varphi(D-n)f.$$
Therefore we have $\sum_{n\in\bb{Z}^{3}}\varphi(D-n)f$ is still a real-valued function. This fact ensures that the decomposition $(\ref{eq:decomposition})$ is closed within real-valued functions.\\

We then define
\begin{equation}
\label{eq:fomega}
D^{-\alpha}f^{\omega}(x)=\sum_{n\in\bb{Z}^{3}}h_{n}(\omega)\varphi(D-n)D^{-\alpha}f(x).
\end{equation}
We then let $f^{\omega}$ be the randomized initial data defined in $(\ref{eq:torus})$ or $(\ref{eq:fomega})$, and consider the Cauchy problem of the following Navier-Stokes equations
\begin{equation}
\label{eq:randomsystem}
\begin{cases}
\partial_{t}u=\Delta u-\pp{P}\nabla\cdot(u\otimes u),\\
\nabla\cdot u=0,\\
u(x,0)=f^{\omega}(x).
\end{cases}
\end{equation}
Here $\pp{P}$ is the projection to the divergent free vector space which is defined as
\begin{equation}
\pp{P} u = u-\nabla(-\Delta)^{-1}(\nabla\cdot u).
\end{equation}

Now we are ready to state our main result.

\begin{Theorem}
\label{thm}
Let $0 < T \leq \infty$, $0< \alpha\leq \frac{1}{2}$ and let $f\in \dot{H}^{-\alpha}(\bb{R}^{3})$ or $f\in H^{-\alpha}(\bb{T}^{3})$ be divergence-free. We further assume that $f$ is of mean zero in the periodic case. Then there exists a set $\Sigma\subset\Omega$ of probability 1 such that for any $\omega\in\Sigma$, the random data Navier-Stokes equation $(\ref{eq:randomsystem})$ has a global weak solution $u$
with
$$
u = g+v,
$$
where $g=e^{t\Delta}f^{\omega}$ and $v\in L^{\infty}([0,T],L^{2}_{x})\cap L^{2}([0,T],\dot{H}^{1}_{x})$, and $u\in L^{\infty}([\delta_{0},T],L^{2}_{x})\cap L^{2}([\delta_{0},T],\dot{H}^{1}_{x})$ for every $\delta_{0}>0$. Moreover, define
\begin{equation}
\label{energy}
E(v,t)=\norm{v(\cdot,t)}_{L^{2}}^{2}+\norm{v}^{2}_{L^{2}([0,t],\dot{H}^{1})},
\end{equation}
and we have
$$
\sup_{t\geq 0}E(v,t)\leq C(\alpha,\norm{f}_{\dot{H}^{-\alpha}}),\quad \sup_{t\geq \delta_{0}}E(u,t)\leq C(\alpha,\delta_{0},\norm{f}_{\dot{H}^{-\alpha}}).
$$
\end{Theorem}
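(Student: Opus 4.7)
The plan is to decompose $u=g+v$ with $g:=e^{t\Delta}f^{\omega}$ the heat evolution of the randomized data, so that $v$ must solve, with $v|_{t=0}=0$ and $\nabla\cdot v=0=\nabla\cdot g$,
\begin{equation*}
\partial_{t}v-\Delta v+\pp{P}\nabla\cdot\bigl(v\otimes v+g\otimes v+v\otimes g+g\otimes g\bigr)=0.
\end{equation*}
I would build a global Leray--Hopf-type weak solution $v\in L^\infty_tL^2_x\cap L^2_t\dot H^1_x$ by a Galerkin approximation (first mollifying $g$ if needed), and set $u=g+v$. The asserted bound on $u$ over $[\delta_0,T]$ is then automatic from the heat smoothing $\norm{g(t)}_{L^2}\lesssim t^{-\alpha/2}\norm{f^\omega}_{\dot H^{-\alpha}}$ together with the almost sure finiteness of $\norm{f^\omega}_{\dot H^{-\alpha}}$.

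The first step is to set up almost sure space--time estimates for $g$. Writing $f^\omega=\sum_n h_n\psi_n$ with $\psi_n:=D^{\alpha}\varphi(D-n)D^{-\alpha}f$ (each $\psi_n$ divergence-free and frequency-localized in a unit cube around $n$), the Gaussian Khintchine inequality yields the pointwise bound
\begin{equation*}
\norm{g(t,x)}_{L^q(\Omega)}\lesssim \sqrt{q}\,\Bigl(\sum_n|e^{t\Delta}\psi_n(x)|^2\Bigr)^{1/2}.
\end{equation*}
Combining block-wise Bernstein $\norm{\psi_n}_{L^r}\lesssim \langle n\rangle^{\alpha+3(1/2-1/r)}\norm{\varphi(D-n)D^{-\alpha}f}_{L^2}$ with the heat decay $e^{-ct|n|^2}$, an $\ell^2$-resummation, Minkowski integral inequality, and Chebyshev would give, almost surely in $\omega$,
\begin{equation*}
\norm{g}_{L^{p}_{t}([0,T];L^{r}_{x})}\le C_\omega(T,\norm{f}_{\dot H^{-\alpha}})
\end{equation*}
on the range of $(p,r)$ pairs that feeds the energy argument below, all the way to $\alpha=\tfrac12$.

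The second step is the energy estimate on a Galerkin approximation $v^N$. Testing against $v^N$ and using divergence-freeness to annihilate $\int(v^N\otimes v^N):\nabla v^N$ and $\int(g\otimes v^N):\nabla v^N$ leaves
\begin{equation*}
\tfrac{1}{2}\tfrac{d}{dt}\norm{v^N}_{L^2}^2+\norm{\nabla v^N}_{L^2}^2 \le \left|\int(v^N\otimes g):\nabla v^N\right|+\left|\int(g\otimes g):\nabla v^N\right|.
\end{equation*}
H\"older, the 3D Gagliardo--Nirenberg estimate $\norm{v}_{L^4}\lesssim\norm{v}_{L^2}^{1/4}\norm{\nabla v}_{L^2}^{3/4}$, and Young's inequality bound the first term by $\tfrac14\norm{\nabla v^N}_{L^2}^2+C\norm{g}_{L^4_x}^{8}\norm{v^N}_{L^2}^2$ and the second by $\tfrac14\norm{\nabla v^N}_{L^2}^2+C\norm{g}_{L^4_x}^4$. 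Gr\"onwall with the a.s.\ integrability of $\norm{g}_{L^4_x}^{8}$ and $\norm{g}_{L^4_x}^{4}$ from Step 1 then yields a uniform-in-$N$ control of $\sup_{[0,T]}\norm{v^N}_{L^2}^2+\int_0^T\norm{\nabla v^N}_{L^2}^2\,dt$, and Aubin--Lions compactness extracts a distributional limit $v$.

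The step I expect to be the main obstacle is the probabilistic Step 1 at the endpoint $\alpha=\tfrac12$: a naive Sobolev/square-function chain gives only $\norm{g(t)}_{L^4}\lesssim t^{-(3/4+\alpha)/2}$, which at $\alpha=\tfrac12$ is $t^{-5/8}$ and hence not in $L^{8}_{t}$ near $t=0$. Closing the time integrability therefore forces one to exploit the Gaussian randomization more sharply than through pointwise square-function bounds, presumably through interpolation between different mixed norms or Minkowski exchanges at the level of $\|\cdot\|_{L^p_\omega L^q_t L^r_x}$ that leverage the disjointness of the Wiener blocks in Fourier. It is this sharper probabilistic accounting that should deliver the factor-of-two improvement in the range of $\alpha$ over \cite{ASE}; a secondary technical point is to justify $\int(g\otimes v^N):\nabla v^N=0$ when $g$ is only in a Bochner space of low regularity, which is handled by proving the energy bound first for a spatially mollified $g_\varepsilon$ and then passing $\varepsilon\to 0$.
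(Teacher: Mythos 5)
Your overall architecture (randomize, split $u=g+v$, mixed-norm probabilistic bounds on $g$, Galerkin plus energy estimate for $v$, heat smoothing to place $u$ in $L^2$ for positive times) matches the paper, and your probabilistic Step 1 is essentially the paper's Proposition \ref{lem:est}: Minkowski to pull $L^r(\Omega)$ inside, Khintchine, Bernstein on the unit Wiener blocks, and the exact computation $\norm{e^{-t|\xi|^2}}_{L^p([0,T])}\sim |\xi|^{-2/p}(1-e^{-pT|\xi|^2})^{1/p}$, yielding $\norm{g}_{L^r_\omega L^p_tL^q_x}\lesssim T^{\frac1p-\frac\alpha2}\norm{f}_{\dot H^{-\alpha}}$ under the constraint $\alpha p\leq 2$. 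But that constraint is exactly where your Step 2 breaks down. Your Gr\"onwall scheme bounds the cross term by $\frac14\norm{\nabla v^N}^2_{L^2}+C\norm{g}^8_{L^4_x}\norm{v^N}^2_{L^2}$ and therefore needs $g\in L^8_tL^4_x$ locally in time; with $p=8$ the constraint $\alpha p\leq 2$ forces $\alpha\leq\frac14$. This is not a defect of the probabilistic accounting that sharper interpolation could repair: randomization improves spatial integrability but does nothing to the $t^{-\alpha/2}$ time singularity, so for data with mass spread over all frequencies $\norm{g(t)}_{L^4_x}$ behaves like $t^{-\alpha/2}$ as $t\to0$ and $\norm{g(t)}^8_{L^4_x}\sim t^{-4\alpha}$ fails to be time-integrable once $\alpha\geq\frac14$. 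Your energy step thus reproduces precisely the restriction of \cite{ASE} that the theorem is meant to remove.

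The paper's fix is in the energy estimate, not in the probability. It never applies Young's inequality to the cross term; instead it writes $\norm{v\otimes g}_{L^2_tL^2_x}\leq\norm{v}_{L^6_tL^{18/7}_x}\norm{g}_{L^3_tL^9_x}$ with $\norm{v}_{L^6_tL^{18/7}_x}\leq\norm{v}^{2/3}_{L^\infty_tL^2_x}\norm{v}^{1/3}_{L^2_tL^6_x}\leq\sup_{s}E(v,s)^{1/2}$, arriving at $\sup E\leq C\bigl(\sup E\cdot\norm{g}_{L^3_tL^9_x}+\sup E^{1/2}\norm{g}^2_{L^4_tL^4_x}\bigr)$. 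Lowering the time exponent on $g$ from $8$ to $3$ relaxes the constraint to $3\alpha<2$ (the price, a high spatial exponent $L^9_x$, is exactly what the randomization tolerates), and the positive power $T^{\frac13-\frac\alpha2}$ makes $\norm{g}_{L^3([0,\delta],L^9_x)}$ almost surely small for small $\delta$, so the first term is absorbed into the left-hand side rather than fed to Gr\"onwall; the binding constraint $\alpha\leq\frac12$ then enters only through the $g\otimes g$ term via $L^4_tL^4_x$. This gives the energy bound only on a short interval $[0,\delta]$, and globality is recovered differently from your plan: for a.e.\ $\tau\in[\delta/2,\delta]$ one has $u(\tau)\in L^2\cap\dot H^1$, and the solution is continued from time $\tau$ by Leray's classical theorem, whose decreasing energy supplies the uniform bounds for $t\geq\delta_0$. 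You should rework your Step 2 along these lines; your secondary point about justifying the cancellation for rough $g$ is handled in the paper by the frequency truncations $P_{\leq N}$ in the approximate system.
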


\indent To prove Theorem $\ref{thm}$, as in \cite{ASE}, we also write $u = g + v$ and then study the equation governing $v$. Clearly, the initial data of $v$ vanishes and thus is smooth enough, but there are inputs to the equation of $v$ coming from the linear part $g$. We try to utilize the improved space-time Lebesgue regularity of the linear heat solution $g = e^{t\Delta}f^\omega$ due to the randomization. Compared with \cite{ASE}, the key improvement in our paper is to derive that the linear evolution $g=e^{t\Delta}f^{\omega}$ has an almost sure $L^{p}_{t}L^{q}_{x}$ bound, which can be  small enough on some short time interval for any given $2\leq p, q< \infty$ and any given $0<\alpha<1$ with $\alpha q<2$. See Lemma $\ref{lem:est}$ for details. For $0< \alpha \leq \frac{1}{2}$, using this smallness of $L^3_{t}L^9_{x}$ norm and the boundness of $L^{4}_{t}L^4_{x}$ norm of $g$, we are able to derive the energy bound for $v$ on a short time interval. On the other hand, for a.e. $t_0 > 0$, $u(t_0, \cdot)$ will be a $L^2$ function. Then we may use the classical result by Leray \cite{LER} to extend this weak solution to be a global one in time.

\indent
The remaining part of this paper is organized as follows. In section 2 we will first recall several important properties of randomization that we will use. Then we make our probabilistic estimate and lay down some probabilistic set-up of our conclusion. Then we derive the energy bound for $v$ in section 3, and the proof of Theorem $\ref{thm}$ is also given at the end of section 3. In section 4 we will construct the global weak solution for the Navier-Stokes equation for randomized initial data $f^{\omega}$.\\

\section{Average effects}
Let's begin this section with a large derivation bound property which is proved on Lemma 3.1 in \cite{RDC}.
\begin{Lemma}
\label{lem:prob}
Let $(h_{i}(\omega))$ be a sequence of real, mean zero and independent random variables on a probability space$(\Omega,\mathcal{A},\mathcal{P})$. Let $\mu_{i}$ be the associatied distributions. Assume that $\mu_{i}$ satisfy the following property
\begin{equation}
\label{eq:burq}
\exists c>0:\forall\gamma\in\bb{R},\forall i\in\mathbb{N}^{+},\big|\int_{-\infty}^{\infty}e^{\gamma x}d\mu_{i}\big|\leq e^{c\gamma^{2}}.
\end{equation}
Then there exists $\beta>0$ such that for every $\lambda>0$, every sequence $\{c_{i}\}\in l^{2}$ of real numbers,
$$
\mathcal{P}\left(\omega:\big|\sum_{i=1}^{\infty}c_{i}h_{i}(\omega)\big|>\lambda\right)\leq 2e^{-\frac{\beta\lambda^{2}}{\sum_{i=1}c_{i}^{2}}}.
$$
Moreover, there exists $C>0$ such that for every $r\geq2$ and every $(c_{i})\in l^{2}$,
\begin{equation}
\label{eq:gain}
\norm{\sum_{i=1}^{\infty}c_{i}h_{i}(\omega)}_{L^{r}(\Omega)}\leq C\sqrt{r}\left(\sum_{i=1}^{\infty}c_{i}^{2}\right)^{\frac{1}{2}}.
\end{equation}
\end{Lemma}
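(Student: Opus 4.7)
The plan is to prove this as a standard Chernoff/sub-Gaussian argument: condition \eqref{eq:burq} says precisely that each $h_i$ has a sub-Gaussian moment generating function, and independence lets this property tensorize across finite (and then infinite) linear combinations. First I would verify that $S_N = \sum_{i=1}^N c_i h_i$ satisfies
\begin{equation*}
\mathbb{E}\bigl[e^{\gamma S_N}\bigr] \;=\; \prod_{i=1}^N \int_{-\infty}^{\infty} e^{\gamma c_i x}\,d\mu_i(x) \;\leq\; \prod_{i=1}^N e^{c\gamma^2 c_i^2} \;=\; e^{c\gamma^2 \sum_{i=1}^N c_i^2},
\end{equation*}
by independence and \eqref{eq:burq} applied to each factor with parameter $\gamma c_i$. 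The infinite sum is handled by passing to the limit, using Fatou or noting that $S_N$ converges a.s. and in $L^2$ whenever $\{c_i\} \in \ell^2$ (which already follows from an easy second-moment computation using the sub-Gaussian bound).

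Next I would apply Markov's inequality to $e^{\gamma S}$ with $S = \sum_i c_i h_i$: for every $\gamma > 0$,
\begin{equation*}
\mathcal{P}(S > \lambda) \;\leq\; e^{-\gamma \lambda}\,\mathbb{E}[e^{\gamma S}] \;\leq\; \exp\!\bigl(-\gamma\lambda + c\gamma^2 \|c\|_{\ell^2}^2\bigr).
\end{equation*}
Optimizing in $\gamma$ (choose $\gamma = \lambda/(2c\|c\|_{\ell^2}^2)$) gives $\mathcal{P}(S>\lambda) \leq e^{-\lambda^2/(4c\|c\|_{\ell^2}^2)}$, and applying the same argument to $-S$ (which is possible since $-h_i$ has the same sub-Gaussian property with the same constant $c$) and adding the two estimates produces the two-sided bound with $\beta = 1/(4c)$.

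For the second conclusion I would use the layer-cake representation
\begin{equation*}
\|S\|_{L^r(\Omega)}^r \;=\; r\int_0^\infty \lambda^{r-1} \mathcal{P}(|S| > \lambda)\,d\lambda \;\leq\; 2r\int_0^\infty \lambda^{r-1} e^{-\beta \lambda^2/\|c\|_{\ell^2}^2}\,d\lambda,
\end{equation*}
and then the substitution $u = \beta \lambda^2/\|c\|_{\ell^2}^2$ turns the right-hand side into a Gamma integral $\Gamma(r/2)$ times $\|c\|_{\ell^2}^r\,\beta^{-r/2}\,r$. Stirling's bound $\Gamma(r/2) \leq C(r/2)^{r/2}$ for $r \geq 2$ then yields $\|S\|_{L^r(\Omega)} \leq C\sqrt{r}\,\|c\|_{\ell^2}$, which is \eqref{eq:gain}.

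There is no real obstacle here; the mildly delicate points are justifying the interchange of the infinite product and the expectation (handled by monotone convergence on partial sums combined with the $L^2$ convergence of $S_N$) and tracking the $\sqrt{r}$ growth in $r$ carefully, which is exactly what the Gamma-function asymptotics are designed to give.
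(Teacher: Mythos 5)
Your argument is correct and is exactly the standard sub-Gaussian Chernoff proof; the paper itself does not prove this lemma but simply cites Lemma 3.1 of Burq--Tzvetkov, whose proof proceeds along the same lines (exponential moment bound by independence, Markov plus optimization in $\gamma$ for the tail estimate, then the layer-cake formula and the Gamma integral for the $L^{r}$ bound). The only cosmetic point is that the interchange of limit and expectation for the infinite sum is justified by Fatou together with the a.s.\ convergence of $S_{N}$ (not by monotone convergence, since $e^{\gamma S_{N}}$ is not monotone in $N$), which you in fact also mention.
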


\begin{Remark}
Burq and Tzvetkov have shown in $\cite{RDC}$ that the Gaussian distribution satisfies the assumption $(\ref{eq:burq})$ so that this lemma can be used in our randomization. Moreover, by the result of Lemma $\ref{lem:prob}$ we have
$$
\mathcal{P}\left(\omega:\norm{f^{\omega}}_{\dot{H}^{-\alpha}}>\lambda\right)\leq 2\exp(-\frac{\beta\lambda^{2}}{\norm{f}_{\dot{H}^{-\alpha}}}).
$$
Hence, we can see that $f^{\omega}$ is in $\dot{H}^{-\alpha}$ almost surely provided $f$ is in $\dot{H}^{-\alpha}$.
\end{Remark}

Now we turn to the estimate of the linear part $g$. We first consider the deterministic estimate of $g$ which are standard and well-known. For instance, the periodic case is proved in Lemma 3.1 of $\cite{ASE}$. The whole space case can be treated in a similar way.
\begin{Lemma}
Let $0<\alpha\leq\frac{1}{2}$, let k be a nonnegative integer and $g=e^{t\Delta}f^{\omega}$. If $f^{\omega}\in \dot{H}^{-\alpha}$, then we have
\begin{equation}
\label{determ}
\begin{cases}
\norm{D^{k}g(\cdot,t)}_{L^{2}}\leq Ct^{-\frac{\alpha+k}{2}}\norm{f}_{\dot{H}^{-\alpha}},\\
\norm{D^{k}g(\cdot,t)}_{L^{\infty}}\leq C(\max\{t^{-\frac{1}{2}}, t^{-\frac{\alpha+k+d/2}{2}}\})\norm{f}_{\dot{H}^{-\alpha}}.
\end{cases}
\end{equation}
\end{Lemma}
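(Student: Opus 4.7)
By the Remark above, $f^\omega\in\dot H^{-\alpha}$ almost surely, so this lemma is really a deterministic estimate on $g=e^{t\Delta}h$ for a generic $h\in\dot H^{-\alpha}$. The plan is to do everything on the Fourier side, where $\widehat{D^k g}(\xi,t)=(2\pi|\xi|)^k e^{-4\pi^2|\xi|^2 t}\widehat h(\xi)$, and to trade the polynomial factor $|\xi|^k$ against the Gaussian $e^{-c|\xi|^2 t}$, inserting the weight $|\xi|^{-\alpha}$ wherever it produces the $\dot H^{-\alpha}$ norm of $h$ at the end.

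For the $L^2$ estimate I would apply Plancherel and factor the integrand as $(|\xi|^{2(k+\alpha)}e^{-8\pi^2|\xi|^2 t})\cdot(|\xi|^{-2\alpha}|\widehat h(\xi)|^2)$, pulling the parenthesised factor out in $L^\infty_\xi$. Its supremum over $\xi$ is attained at $|\xi|\sim t^{-1/2}$ with value of order $t^{-(k+\alpha)}$; what remains inside the integral is exactly $\|h\|_{\dot H^{-\alpha}}^2$, and taking square roots gives the first inequality.

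For the $L^\infty$ estimate I would start from the Hausdorff--Young bound $\|u\|_{L^\infty}\le\|\widehat u\|_{L^1}$ and apply Cauchy--Schwarz against the weight $|\xi|^{-\alpha}$ to obtain $\|D^k g(\cdot,t)\|_{L^\infty}\lesssim\bigl(\int_{\bb{R}^3}|\xi|^{2(k+\alpha)}e^{-8\pi^2|\xi|^2 t}\,d\xi\bigr)^{1/2}\|h\|_{\dot H^{-\alpha}}$. The rescaling $\xi=t^{-1/2}\eta$ evaluates the remaining integral as $\asymp t^{-(k+\alpha+d/2)}$, giving the small-$t$ branch of the maximum. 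For the $t^{-1/2}$ branch: when $t\ge 1$ and $d=3$ the exponent $(k+\alpha+d/2)/2\ge 3/4>1/2$, so the same estimate is already stronger than $t^{-1/2}\|h\|_{\dot H^{-\alpha}}$ and the $\max$ is realised for free; if one prefers a more robust route, Sobolev embedding $\|D^k g\|_{L^\infty}\lesssim\|D^{k+s}g\|_{L^2}$ with any $s>d/2$, followed by the $L^2$ estimate of the previous paragraph, yields the same envelope.

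The only mildly delicate point, and hence the step I would be most careful about, is the replacement of $\|f^\omega\|_{\dot H^{-\alpha}}$ by $\|f\|_{\dot H^{-\alpha}}$ on the right-hand side. These are comparable on a set of full probability thanks to (\ref{eq:gain}) in Lemma \ref{lem:prob} together with the subsequent Remark, so the estimates pass from $h=f^\omega$ to $f$ with only a harmless random constant. Beyond that, no part of the argument presents a genuine obstacle: the entire lemma is a clean pairing of the heat-semigroup Gaussian with the $\dot H^{-\alpha}$ weight, the only mildly unusual feature being the two-branch $\max$ in the $L^\infty$ bound.
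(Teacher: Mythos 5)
Your proof is correct and is exactly the standard argument the paper has in mind: the paper gives no proof of this lemma, merely citing Lemma 3.1 of \cite{ASE} for the periodic case and asserting the whole-space case is similar, and your Plancherel and Cauchy--Schwarz computations against the Gaussian weight $e^{-c|\xi|^{2}t}$ are precisely that standard argument. Your two side remarks --- that the $t^{-1/2}$ branch of the maximum is subsumed by the $t^{-(\alpha+k+d/2)/2}$ bound when $d=3$, and that replacing $\norm{f^{\omega}}_{\dot{H}^{-\alpha}}$ by $\norm{f}_{\dot{H}^{-\alpha}}$ on the right-hand side costs an almost surely finite random constant (a point the paper glosses over) --- are both sound.
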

\indent This deterministic estimate directly provides the smoothness of the linear evolution of $g$ as $t>0$. Next we will follow the approach applied in \cite{RDC}, using the average effect to make an $L^{r}_{\omega}L^{p}_{T}L^{q}_{x}$-estimate of $g$, and we will see the linear part determined by randomized initial data does provide an improved integral estimate although the randomization introduces no Sobolev regularity on initial data. \\
\indent Before we continue further, let's make a brief recall to the smooth projections in the Littlewood-Paley theory. Let $\rho$ be a smooth real-valued bump function supported in $\{\xi\in \mathbb{R}^{3}:|\xi|\leq 2\}$ and $\rho=1$ on $\{\xi:|\xi|\leq 1\}$. If $M\geq 1$ is a dyadic number, we define $P_{\leq M}$ onto the truncated space $\{|\xi|\leq M\}$ as
$$
\widehat{P_{\leq M}v}(\xi)=\widehat{v}(\xi)\rho(\xi/M).
$$
Similarly, we can define the smooth projector $P_{M}$ onto the truncated space $\{|\xi|\sim M\}$ as
\begin{equation}
\label{eq:projector}
\widehat{P_{M}v}(\xi)=\widehat{v}(\xi)(\rho(\xi/M)-\rho(2\xi/M)).
\end{equation}
In our decomposition
$$
f= \sum_{n\in\bb{Z}^{3}}\varphi(D-n)f,
$$
the function $\varphi(D-n)f$ has a bounded frequency support in $n+[-1,1]^{3}$. For these functions, we have the following classical Bernstein inequalities.
\begin{Lemma}[\textbf{Bernstein}]
Let $\mathcal{B}$ be a ball. A constant $C$ exists such that for any nonnegative integer $k$, any couple $(p,q)$ in $[1,\infty]^{2}$ with $1\leq p\leq q$, and any function $\psi\in L^{p}$ with supp$\widehat{\psi}\in N\mathcal{B}$, we have
\begin{equation}
\norm{\psi}_{L^{q}}\leq CN^{3(\frac{1}{p}-\frac{1}{q})}\norm{\psi}_{L^{p}}.
\end{equation}
\end{Lemma}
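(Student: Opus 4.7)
The plan is to reduce the inequality to a convolution estimate in physical space, exploiting the fact that compact frequency support means $\psi$ can be recovered by convolving with a Schwartz kernel whose $L^{r}$ norm has a clean $N$-scaling. The starting observation is that since $\widehat{\psi}$ is supported in $N\mathcal{B}$, we may pick once and for all a fixed $\chi\in C_{c}^{\infty}(\mathbb{R}^{3})$ which equals $1$ on $\mathcal{B}$ and is supported in, say, $2\mathcal{B}$. Then the identity $\chi(\xi/N)\widehat{\psi}(\xi)=\widehat{\psi}(\xi)$ holds, and taking inverse Fourier transforms yields
$$
\psi = h_{N}\ast\psi,\qquad h_{N}(x):=N^{3}h(Nx),\quad h:=\mathcal{F}^{-1}\chi\in\mathcal{S}(\mathbb{R}^{3}).
$$

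Next I would invoke Young's convolution inequality with exponents chosen by $1+\tfrac{1}{q}=\tfrac{1}{r}+\tfrac{1}{p}$, which is solvable in $r\in[1,\infty]$ precisely because $p\le q$. This gives
$$
\norm{\psi}_{L^{q}}=\norm{h_{N}\ast\psi}_{L^{q}}\le \norm{h_{N}}_{L^{r}}\norm{\psi}_{L^{p}}.
$$
A direct change of variables shows $\norm{h_{N}}_{L^{r}}=N^{3(1-1/r)}\norm{h}_{L^{r}}$, and by the chosen relation between the exponents, $3(1-1/r)=3(\tfrac{1}{p}-\tfrac{1}{q})$. Since $h$ is Schwartz, $\norm{h}_{L^{r}}$ is a finite constant depending only on the auxiliary cutoff $\chi$ and the ball $\mathcal{B}$, hence absorbable into the constant $C$. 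Combining these two displays gives the advertised inequality.

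The only real subtlety is making the constant genuinely independent of $N$ and of the placement of the ball; this is handled by fixing $\chi$ once for a normalized reference ball and noting that dilations act simply on the convolution kernel, so nothing in the argument depends on $N$ beyond the explicit factor. There is no obstacle here: the whole argument is the standard Littlewood--Paley/Young combination and the parameter $k$ that appears in the hypothesis plays no role in this particular inequality (the companion derivative estimate $\norm{D^{k}\psi}_{L^{p}}\lesssim N^{k}\norm{\psi}_{L^{p}}$ would be proved in exactly the same way, applying the identity to $(i\xi)^{\beta}\widehat{\psi}(\xi)=(i\xi)^{\beta}\chi(\xi/N)\widehat{\psi}(\xi)$ and noting that the kernel $\mathcal{F}^{-1}((i\xi)^{\beta}\chi(\xi/N))$ rescales with an additional factor $N^{|\beta|}$).
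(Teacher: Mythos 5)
Your argument is correct and is the standard proof of Bernstein's inequality: writing $\psi=h_{N}\ast\psi$ via a fixed cutoff adapted to $N\mathcal{B}$ and applying Young's inequality with $1+\frac{1}{q}=\frac{1}{r}+\frac{1}{p}$, together with the scaling $\norm{h_{N}}_{L^{r}}=N^{3(1-1/r)}\norm{h}_{L^{r}}$, gives exactly the stated bound (and $\norm{h}_{L^{r}}\leq\max\{\norm{h}_{L^{1}},\norm{h}_{L^{\infty}}\}$ makes the constant uniform in $p,q$). The paper states this lemma without proof as a classical fact, so there is no in-paper argument to compare against; your proof supplies the expected one, and your remark that $k$ is vestigial in this particular formulation is also accurate.
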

Let $\psi(x)=e^{2\pi inx}\varphi(D-n)f(x)$. We can see that supp $\widehat{\psi}\subset [-1,1]^{3}$. Apply the Bernstein inequality we have
$$
\norm{e^{2\pi inx}\varphi(D-n)f(x)}_{L^{q}_{x}}\leq C\norm{e^{2\pi inx}\varphi(D-n)f(x)}_{L^{p}_{x}}.
$$
Let $p=2$ and $q\geq 2$, we have
\begin{equation}
\label{eq:Bernstein}
\norm{\varphi(D-n)f(x)}_{L^{q}_{x}}\leq C\norm{\varphi(D-n)f(x)}_{L^{2}_{x}}.
\end{equation}

We have the probabilistic estimate for the linear part $g$ for both the periodic space case and the whole space case.
\begin{Proposition}
\label{lem:est}
Let $T>0$, $0< \alpha < 1$, let $p,q$ satisfy $2\leq p\leq r<\infty$ and $2\leq q\leq r<\infty$. If $\alpha p\leq 2$, then there exists $C_{p,q,\alpha}>0$ such that
\begin{equation}
\label{eq:prob}
\norm{e^{t\Delta}f^{\omega}}_{L^{r}(\Omega;L^{p}([0,T],L^{q}_{x}))}\leq
C_{p,q,\alpha}T^{\frac{1}{p}-\frac{\alpha}{2}}\norm{f}_{\dot{H}^{-\alpha}},
\end{equation}
for every $f\in \dot{H}^{-\alpha}(\bb{R}^{3})$ and every mean zero $f\in H^{-\alpha}(\bb{T}^{3})$.
\end{Proposition}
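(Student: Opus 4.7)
The plan is to push the $L^{r}(\Omega)$ norm all the way inside using Minkowski's inequality (which is legitimate because $r \geq p$ and $r \geq q$), then exploit the Gaussian concentration (\ref{eq:gain}) pointwise in $(t,x)$ to reduce the problem to a deterministic frequency-localized estimate. In the whole-space case, commuting the Fourier multipliers $D^{\alpha}$, $\varphi(D-n)$ and $e^{t\Delta}$ in (\ref{eq:fomega}) gives
\begin{equation*}
e^{t\Delta}f^{\omega}(t,x) = \sum_{n\in\bb{Z}^{3}} h_{n}(\omega)\, G_{n}(t,x), \qquad G_{n}(t,x) := \bigl[e^{t\Delta}\varphi(D-n)f\bigr](x),
\end{equation*}
so Lemma \ref{lem:prob} applied pointwise yields
\begin{equation*}
\norm{e^{t\Delta}f^{\omega}(t,x)}_{L^{r}(\Omega)} \leq C\sqrt{r}\,\Bigl(\sum_{n}|G_{n}(t,x)|^{2}\Bigr)^{1/2}.
\end{equation*}

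Next I would take the $L^{q}_{x}$ norm of both sides and use Minkowski's inequality in $L^{q/2}$ (valid since $q\geq 2$) to interchange the square-sum with the $L^{q}_{x}$ norm, arriving at the $\ell^{2}_{n}(L^{q}_{x})$ norm of $\{G_{n}(t,\cdot)\}$. Since each $G_{n}(t,\cdot)$ has Fourier support in $n+[-1,1]^{3}$, a modulation by $e^{-2\pi i n\cdot x}$ moves this support to the fixed cube $[-1,1]^{3}$, and Bernstein's inequality (\ref{eq:Bernstein}) gives $\norm{G_{n}(t,\cdot)}_{L^{q}_{x}} \leq C\norm{G_{n}(t,\cdot)}_{L^{2}_{x}}$ with constant uniform in $n$. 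The $L^{2}$-sum is then estimated via Plancherel together with $0\leq\varphi\leq 1$ and $\sum_{n}\varphi(\xi-n)=1$ (so that $\sum_{n}\varphi(\xi-n)^{2}\leq 1$):
\begin{equation*}
\sum_{n}\norm{G_{n}(t,\cdot)}_{L^{2}_{x}}^{2} \leq \int_{\bb{R}^{3}}e^{-ct|\xi|^{2}}|\widehat{f}(\xi)|^{2}\,d\xi \leq \Bigl(\sup_{\xi}|\xi|^{2\alpha}e^{-ct|\xi|^{2}}\Bigr)\norm{f}_{\dot{H}^{-\alpha}}^{2} \leq C_{\alpha}\,t^{-\alpha}\norm{f}_{\dot{H}^{-\alpha}}^{2},
\end{equation*}
where the explicit optimization $\sup_{\eta\geq 0}\eta^{2\alpha}e^{-ct\eta^{2}}\sim t^{-\alpha}$ supplies the $t^{-\alpha}$ factor. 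Taking the $L^{p}_{t}$ norm over $[0,T]$ produces $\int_{0}^{T}t^{-\alpha p/2}\,dt = C(1-\alpha p/2)^{-1}T^{1-\alpha p/2}$, which is finite when $\alpha p<2$ and whose $p$-th root is the claimed $T^{1/p-\alpha/2}$ factor.

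For the torus case the same scheme applies with the Wiener decomposition replaced by the Fourier series $f=\sum_{k\neq 0}c_{k}e^{2\pi i k\cdot x}$: Bernstein is unnecessary because the eigenfunctions have constant modulus, and the mean-zero hypothesis removes the $k=0$ mode (where $|k|^{-2\alpha}$ would be singular), so one bounds $\sum_{k\neq 0}|c_{k}|^{2}e^{-ct|k|^{2}}\leq C_{\alpha}t^{-\alpha}\norm{f}_{\dot{H}^{-\alpha}(\bb{T}^{3})}^{2}$ in precisely the same way. The main delicacy I expect is in the third displayed line: the whole gain from randomization lives in the exchange of the non-integrable weight $|\xi|^{-2\alpha}$ at infinity for the integrable short-time singularity $t^{-\alpha}$, and the resulting interplay with the threshold $\alpha p<2$ is what dictates the admissible range of exponents in (\ref{eq:prob}).
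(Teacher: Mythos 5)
Your overall strategy --- Minkowski to push $L^{r}(\Omega)$ inside (using $r\geq p,q$), the Khinchin-type bound (\ref{eq:gain}) applied pointwise in $(t,x)$, Bernstein (\ref{eq:Bernstein}) to pass from $L^{q}_{x}$ to $L^{2}_{x}$ on each unit frequency cube, and Plancherel --- is exactly the paper's. The genuine gap is in your final step, where you first take $\sup_{\xi}|\xi|^{2\alpha}e^{-ct|\xi|^{2}}\lesssim t^{-\alpha}$ pointwise in $t$ and only afterwards take the $L^{p}_{t}$ norm, producing $\int_{0}^{T}t^{-\alpha p/2}\,dt$. As you yourself note, this is finite only when $\alpha p<2$, whereas the proposition claims $\alpha p\leq 2$. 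The endpoint is not decorative: Proposition \ref{prop:311} invokes the result with $(p_{1},q_{1})=(4,4)$ and $0<\alpha\leq\frac{1}{2}$, so at $\alpha=\frac{1}{2}$ one needs precisely $\alpha p=2$; your argument would force the main theorem back to the open range $\alpha<\frac{1}{2}$. The order of operations is the culprit --- you are bounding $\bigl\Vert\sup_{\xi}(\cdots)\bigr\Vert_{L^{p}_{t}}$, which is larger (here, infinite at the endpoint) than the quantity $\sup_{\xi}\norm{\cdots}_{L^{p}_{t}}$ that actually arises.

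The repair is to reverse the order, which is what the paper does: after Plancherel, apply Minkowski once more (legitimate since $p\geq2$) to put $L^{p}_{t}$ inside $L^{2}_{\xi}$, and compute the time integral exactly at each fixed frequency, $\norm{e^{-t|\xi|^{2}}}_{L^{p}[0,T]}=p^{-1/p}|\xi|^{-2/p}\bigl(1-e^{-p|\xi|^{2}T}\bigr)^{1/p}$. Writing $\widehat{f}=|\xi|^{\alpha}\widehat{D^{-\alpha}f}$, the constant to control becomes $J(T)=\sup_{\xi}|\xi|^{\alpha-\frac{2}{p}}\bigl(1-e^{-p|\xi|^{2}T}\bigr)^{1/p}$, which is $\leq1$ when $\alpha p=2$ and $\leq C_{\alpha,p}T^{\frac{1}{p}-\frac{\alpha}{2}}$ when $\alpha p<2$; the factor $\bigl(1-e^{-p|\xi|^{2}T}\bigr)^{1/p}$ supplies exactly the low-frequency decay that your pointwise-in-$t$ supremum discards. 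With that single substitution (and its obvious analogue on $\bb{T}^{3}$, where your remaining steps, including the removal of the zero mode, are fine), your proof coincides with the paper's.
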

\begin{proof}
We first consider the periodic space case.
First using $(\ref{eq:torus})$, we can write
$$
e^{t\Delta}f^{\omega}
=D^{\alpha}e^{t\Delta}\sum_{n}a_{n}h_{n}(\omega)e_{n}(x).
$$
Since $2\leq p,q\leq r$, by Minkovski inequality and Lemma $\ref{lem:prob}$, we can get
\begin{align*}
&\big\|D^{\alpha}e^{t\Delta}\sum_{n}a_{n}h_{n}(\omega)e_{n}(x)\big\|_{L^{r}(\Omega;L^{p}([0,T];L^{q}_{x}(\bb{T}^{3})))}\\
\leq & \big\|\sum_{n}a_{n}D^{\alpha}e^{t\Delta}e_{n}(x)h_{n}(\omega)\big\|_{L^{p}([0,T];L^{q}_{x}(\bb{T}^{3};L^{r}(\Omega)))}\\
\leq &\sqrt{r}\norm{a_{n}D^{\alpha}e^{t\Delta}e_{n}(x)}_{L^{p}([0,T];L^{q}_{x}(\bb{T}^{3});l^{2}_{n})}\\
\leq &\sqrt{r}\norm{\norm{a_{n}D^{\alpha}e^{t\Delta}e_{n}(x)}_{L^{p}([0,T];L^{q}_{x}(\bb{T}^{3}))}}_{l^{2}_{n}}.
\end{align*}
We recall the Young inequality which shows that when $q\geq 2$,
$$
\norm{f}_{L^{q}}\leq C_q\norm{\widehat{f}}_{l^{q^{'}}},\quad \frac{1}{q}+\frac{1}{q'}=1.
$$
Also with the help of Minkovski inequality
\begin{align*}
&\norm{\norm{a_{n}D^{\alpha}e^{t\Delta}e_{n}(x)}_{L^{p}([0,T];L^{q}_{x}(\bb{T}^{3}))}}_{l^{2}_{n}}\\
\leq&C_{q}\norm{a_{n}\norm{|m|^{\alpha}e^{-t|m|^{2}}\widehat{e_{n}}(m)}_{L^{p}([0,T];l^{q{'}}_{m})}}_{l^{2}_{n}}\\
=&C_{q}\norm{a_{n}|n|^{\alpha}\norm{e^{-|n|^{2}t}}_{L^{p}[0,T]}}_{l^{2}_{n}}\\
\leq&C_{q}\norm{a_{n}\sup_{|n|\in\bb{Z}^{3}/\{0\}}|n|^{\alpha-\frac{2}{p}}(1-e^{-p|n|^{2}T})^{\frac{1}{p}}}_{l^{2}_{n}}\\
\end{align*}
Let's denote
$$J(T)=\sup_{\xi\in\bb{R}^{3}}|\xi|^{\alpha-\frac{2}{p}}(1-e^{-p|\xi|^{2}T})^{\frac{1}{p}},$$
thus
\begin{align*}
&\big\|D^{\alpha}e^{t\Delta}\sum_{n}a_{n}h_{n}(\omega)e_{n}(x)\big\|_{L^{r}(\Omega;L^{p}([0,T];L^{q}_{x}(\bb{T}^{3})))}\\
\leq &C_{q,r}\sup_{|n|\in\bb{Z}^{3}/\{0\}}|n|^{\alpha-\frac{2}{p}}(1-e^{-|n|^{2}Tp})^{\frac{1}{p}}\norm{a_{n}}_{l^{2}_{n}}\\
\leq &C_{q,r}J(T)\norm{f}_{\dot{H}^{-\alpha}}.
\end{align*}\\

For the whole space case, using $(\ref{eq:fomega})$, we can write
\begin{align*}
e^{t\Delta}f^{\omega}
&=e^{t\Delta}\sum_{n\in\bb{Z}^{3}}h_{n}(\omega)\varphi(D-n)f(x)
\end{align*}
Since $2\leq p,q\leq r$, by Minkovski inequality and Lemma $\ref{lem:prob}$, we can get
\begin{align*}
&\big\|e^{t\Delta}\sum_{n\in\bb{Z}^{3}}h_{n}(\omega)\varphi(D-n)f(x)\big\|_{L^{r}(\Omega;L^{p}([0,T];L^{q}_{x}(\bb{R}^{3})))}\\
\leq & \big\|\sum_{n\in\bb{Z}^{3}}h_{n}(\omega)\varphi(D-n)e^{t\Delta}f(x)\big\|_{L^{p}([0,T];L^{q}_{x}(\bb{R}^{3};L^{r}(\Omega)))}\\
\leq &\sqrt{r}\norm{\varphi(D-n)e^{t\Delta}f(x)}_{L^{p}([0,T];L^{q}_{x}(\bb{R}^{3});l^{2}_{n})}\\
\leq &\sqrt{r}\norm{\varphi(D-n)e^{t\Delta}f(x)}_{l^{2}_{n};L^{p}([0,T];L^{q}_{x}(\bb{R}^{3}))}.
\end{align*}
Now the Bernstein inequality $(\ref{eq:Bernstein})$ comes in. With Minkovski inequality, we have
\begin{align*}
&\norm{\varphi(D-n)e^{t\Delta}f(x)}_{l^{2}_{n};L^{p}([0,T];L^{q}_{x}(\bb{R}^{3}))}\\
\leq&C_{q}\norm{\varphi(D-n)e^{t\Delta}f(x)}_{l^{2}_{n};L^{p}([0,T];L^{2}_{x}(\bb{R}^{3}))}\\
\leq&C_{q}\norm{\varphi(\xi-n)\widehat{f}(\xi)\norm{e^{-t|\xi|^{2}}}_{L^{p}[0,T]}}_{L^{2}_{\xi}l^{2}_{n}}\\
=&C_{q}\norm{\varphi(\xi-n)\widehat{D^{-\alpha}f}(\xi)|\xi|^{\alpha-\frac{2}{p}}(1-e^{-T|\xi|^{2}p})^{\frac{1}{p}}}_{L^{2}_{\xi}l^{2}_{n}}\\
\leq&C_{q}\sup_{\xi\in R^{3}}(|\xi|^{\alpha-\frac{2}{p}}(1-e^{-T|\xi|^{2}p})^{\frac{1}{p}})\norm{\varphi(\xi-n)
\widehat{D^{-\alpha}f}(\xi)}_{L^{2}_{\xi}l^{2}_{n}}.
\end{align*}
Since $\varphi(\xi)\leq 1$, we have
\begin{align*}
&\big\|e^{t\Delta}\sum_{n\in\bb{Z}^{3}}h_{n}(\omega)\varphi(D-n)f(x)\big\|_{L^{r}(\Omega;L^{p}([0,T];L^{q}_{x}(\bb{R}^{3})))}\\
\leq &C_{r,q}J(T)\norm{\varphi(\xi-n)\widehat{D^{-\alpha}f}(\xi)}_{L^{2}_{\xi}l^{2}_{n}}\\
\leq &C_{r,q}J(T)\norm{\widehat{D^{-\alpha}f}(\xi)}_{L^{2}_{\xi}}\\
=&C_{r,q}J(T)\norm{f}_{\dot{H}^{-\alpha}(\bb{R}^{3})}.
\end{align*}

Now we estimate the term $J(T)$. Define $I(y)$ be the function
$$
I(y)=y^{\alpha-\frac{2}{p}}(1-e^{-y^{2}Tp})^{\frac{1}{p}},\quad (y\geq 0).
$$
and thus
$$
J(T)=\sup_{\xi\in\bb{R}^{3}}I(|\xi|).
$$
It is clear that if $\alpha p=2$, then the Lemma is correct.

Now let us assume that $\alpha p<2$. Make the Tylor expansion around the point $y=0$, we have
$$
I(y)=y^{\alpha-\frac{2}{p}}(y^{2}Tp-y^{4}Tp+o(y^{4}))^{\frac{1}{p}}
=C_{p}y^{\alpha}T^{\frac{1}{p}}+o(y^{\alpha}).
$$
Hence $I(0)=0$. With $\alpha p< 2$, we can see $I(\infty)=0$. Hence We can see that $I(y)$ reaches its maximal only when $y$ reaches the stationary point. With a derivation calculation, we get
$$
[I(y)^{p}]'=0\Leftrightarrow \left(\alpha p-2\right)\left(1-e^{-y^{2}Tp}\right)+2e^{-y^{2}Tp}pTy^{2}=0.
$$
Note that $y^{\alpha}T^{\frac{\alpha}{2}}e^{-y^{2}T}\leq C$, and thus the maximal of $I(y)$ has an upper bound
\begin{align*}
I(y)&\leq y^{\alpha-\frac{2}{p}}e^{-y^{2}T}(2pT)^{\frac{1}{p}}y^{\frac{2}{p}}(\frac{2}{p}-\alpha)^{\frac{1}{p}}\\
&=\left(y^{\alpha}T^{\frac{\alpha}{2}}e^{-y^{2}T}\right)T^{\frac{1}{p}-\frac{\alpha}{2}}\tilde{C}_{\alpha,p}\\
&\leq C_{\alpha,p}T^{\frac{1}{p}-\frac{\alpha}{2}}.
\end{align*}
In our assumption $\alpha p<2$, the term $\frac{1}{p}-\frac{\alpha}{2}$ is positive and hence $T^{\frac{1}{p}-\frac{\alpha}{2}}$ is bounded. Since we can choose $r=\max\{p,q\}$, the inequality $(\ref{eq:prob})$ follows with $J(T)\leq C_{\alpha,p}T^{\frac{1}{p}-\frac{\alpha}{2}}$.
\end{proof}
\begin{Remark}
For convenience, we denote
\begin{equation}
\sigma(q,\alpha)=\frac{1}{p}-\frac{\alpha}{2}.
\end{equation}
we notice that $\sigma(p,\alpha)\geq0$ once the condition $\alpha p\leq 2$ gets satisfied. Hence for a fixed time period $[0,T]$, the estimate of $\norm{e^{t\Delta}f^{\omega}}_{L^{r}(\Omega;L^{p}([0,T],L^{q}_{x}))}$ is bounded. By the result of Lemma $\ref{lem:est}$ we can see that the randomization does provide an improved $L^{p}_{t}L^{q}_{x}$ estimate in the sense of almost sure.
\end{Remark}

With the estimate above, let's define the probabilistic space
\begin{equation}
\label{def:space}
E_{p,q}(\lambda,T)=\{\omega\in\Omega:\norm{e^{t\Delta}f^{\omega}}
_{L^{p}([0,T],L^{q}_{x})}<\lambda\norm{f}_{\dot{H}^{-\alpha}}\}
\end{equation}
Since $h_{n}$ is a series of Gaussian distribution, by Lemma $\ref{lem:prob}$ we have
\begin{equation}
\label{eq:Tchebyshev}
\begin{aligned}
\mathcal{P}\left(E_{p,q}(\lambda,T)^{c}\right)&\leq c \exp\left(-\frac{\lambda^{2}\norm{f}^{2}_{\dot{H}^{-\alpha}}}{C_{\alpha,p}^{2}T^{2\sigma(p,\alpha)}\norm{f}^{2}_{\dot{H}^{-\alpha}}}\right)\\
&=c\exp\left(-\frac{\lambda^{2}}{C^{2}_{\alpha,p}T^{2\sigma(p,\alpha)}}\right)\\
\end{aligned}
\end{equation}
With the equality $(\ref{eq:Tchebyshev})$ let's state the almost-sure argument for the linear evolution $g$.
\begin{Lemma}
\label{lemma1}
Let $0<\alpha <1$ and $T>0$, and let $2\leq p,q <\infty$. If $\alpha p\leq 2$, then there exists a set $\Sigma_{1}\subset\Omega$ such that $\mathcal{P}(\Sigma_{1})=1$, and for every $\omega\in\Sigma_{1}$, we can choose a $M>0$ so that $\omega\in E_{p,q}(M,T)$.
\end{Lemma}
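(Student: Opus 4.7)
The plan is to deduce this almost-sure statement directly from the exponential tail bound (\ref{eq:Tchebyshev}), which is itself a consequence of Proposition \ref{lem:est} and the Gaussian concentration of Lemma \ref{lem:prob}. First I would let $M$ range over positive integers and note that the events $E_{p,q}(M,T)$ increase with $M$ by the definition (\ref{def:space}), so their complements form a nonincreasing chain of measurable sets. For each such $M$, the inequality (\ref{eq:Tchebyshev}) gives
\[
\mathcal{P}\bigl(E_{p,q}(M,T)^c\bigr)\leq c\exp\left(-\frac{M^2}{C_{\alpha,p}^2\, T^{2\sigma(p,\alpha)}}\right),
\]
with $T$ and $\sigma(p,\alpha)$ fixed, so the right-hand side tends to $0$ as $M\to\infty$.

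Next I would set
\[
\Sigma_1 := \bigcup_{M\in\mathbb{N}} E_{p,q}(M,T),
\]
so that $\Sigma_1^c=\bigcap_{M\in\mathbb{N}} E_{p,q}(M,T)^c$. By continuity of measure along the decreasing chain of complements, $\mathcal{P}(\Sigma_1^c)=\lim_{M\to\infty}\mathcal{P}(E_{p,q}(M,T)^c)=0$, and therefore $\mathcal{P}(\Sigma_1)=1$. For any $\omega\in\Sigma_1$, the union decomposition directly supplies an integer $M=M(\omega)$ such that $\omega\in E_{p,q}(M,T)$, which is precisely the conclusion of the lemma.

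There is no genuine obstacle here: once Proposition \ref{lem:est} is in hand, this lemma is a routine measure-theoretic corollary. The only point worth emphasizing is that the constants $C_{\alpha,p}$ and $\sigma(p,\alpha)$ appearing in (\ref{eq:Tchebyshev}) are independent of $M$ and of $\omega$; combined with the hypothesis $\alpha p\leq 2$ inherited from Proposition \ref{lem:est}, this uniformity is exactly what allows the exponential bound to drive $\mathcal{P}(E_{p,q}(M,T)^c)$ to zero monotonically.
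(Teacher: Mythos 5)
Your proposal is correct and follows essentially the same route as the paper: the authors also define $\Sigma_{1}$ as the increasing union $\bigcup_{j}E_{p,q}(2^{j},T)$ and use the tail bound (\ref{eq:Tchebyshev}) to send $\mathcal{P}(E_{p,q}(2^{j},T)^{c})$ to zero, exactly as you do with $M$ ranging over the integers instead of dyadic values. The only cosmetic difference is the choice of the exhausting sequence, which is immaterial.
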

\begin{proof}
Let $\lambda_{j}=2^{j}$, and define $E_{j}=E_{p,q}(\lambda_{j},T)$. We can see that $E_{j}\subset E_{j+1}$, Let's define $\Sigma_{1}=\bigcup_{j}E_{j}$. By $(\ref{eq:Tchebyshev})$, we have
$$
\mathcal{P}(\Sigma_{1})\geq 1-\lim_{j\rightarrow\infty}\mathcal{P}(E_{j}^{c})\geq 1-\lim_{j\rightarrow\infty}\exp{\left(-\frac{\lambda_{j}^{2}}{C^{2}_{\alpha,p}T^{2\sigma(p,\alpha)}}\right)}=1.
$$
We see, for every $\omega\in\Sigma_{1}$, there exists a $j$ such that $\omega\in E_{j}$. Let $M = 2^{j+1}$, and $\omega\in E_{p,q}(M,T)$
\end{proof}
Furthermore, in the case of $\alpha p<2$ we observe that $\sigma(p,\alpha)>0$, which means that the linear part estimate can be governed by $T$. Hence for every $\varepsilon>0$, we can find a $\delta>0$ such that $$\norm{e^{t\Delta}f^{\omega}}_{L^{r}(\Omega;L^{p}([0,\delta],L^{q}_{x}))}<\varepsilon.$$
With this observation, we can get another type of almost-sure argument.
\begin{Lemma}
\label{lemma2}
Let $0<\alpha< 1$, $M>0$, and let $2\leq p,q <\infty$. If $\alpha p<2$, there exists a set $\Sigma_{2}\subset\Omega$ such that $\mathcal{P}(\Sigma_{2})=1$, and for every $\omega\in\Sigma_{2}$, we can choose a $\delta>0$ so that $\omega\in E_{p,q}(M,\delta)$.
\end{Lemma}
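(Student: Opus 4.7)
The plan is to mirror the argument given for Lemma \ref{lemma1}, but now holding $\lambda = M$ fixed and letting the time parameter $T$ shrink to zero instead. The essential new input is that the hypothesis $\alpha p < 2$ gives a strict inequality $\sigma(p,\alpha) > 0$, so the factor $T^{2\sigma(p,\alpha)}$ appearing in the denominator of the exponent in (\ref{eq:Tchebyshev}) actually decays as $T \to 0^{+}$. This is exactly the ingredient that was not available in Lemma \ref{lemma1}, where the borderline case $\alpha p = 2$ was allowed and $T$ had to be frozen while $\lambda$ was sent to infinity.

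First I would pick a decreasing sequence $\delta_{j}\downarrow 0$, for instance $\delta_{j}=2^{-j}$, and set $E_{j}:=E_{p,q}(M,\delta_{j})$. Because the norm $\|e^{t\Delta}f^{\omega}\|_{L^{p}([0,\delta],L^{q}_{x})}$ is monotone nondecreasing in $\delta$, the sets $E_{j}$ are nested: $E_{j}\subseteq E_{j+1}$. The natural candidate is then
\[
\Sigma_{2}:=\bigcup_{j\geq 1}E_{j},
\]
and any $\omega\in\Sigma_{2}$ belongs to some $E_{j}$, so choosing $\delta=\delta_{j}$ immediately produces the desired $\omega\in E_{p,q}(M,\delta)$.

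All that is left is to confirm $\mathcal{P}(\Sigma_{2})=1$. For this I would apply the bound (\ref{eq:Tchebyshev}) with $\lambda=M$ and $T=\delta_{j}$, giving
\[
\mathcal{P}(E_{j}^{c})\leq c\exp\!\left(-\frac{M^{2}}{C_{\alpha,p}^{2}\,\delta_{j}^{2\sigma(p,\alpha)}}\right),
\]
and then exploit the strict positivity of $\sigma(p,\alpha)$: since $\delta_{j}^{2\sigma(p,\alpha)}\to 0$ as $j\to\infty$, the right-hand side tends to zero (in fact super-exponentially fast). Using monotonicity of the $E_{j}$, $\mathcal{P}(\Sigma_{2})\geq 1-\mathcal{P}(E_{j}^{c})$ for every $j$, and letting $j\to\infty$ closes the argument.

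There is no real obstacle here; the whole proof is essentially the mirror image of Lemma \ref{lemma1}. The one point worth emphasizing is that the strict inequality $\alpha p<2$ is genuinely needed — in the borderline case $\alpha p=2$ one has $\sigma(p,\alpha)=0$, the $T$-dependence in (\ref{eq:Tchebyshev}) disappears, and the scheme above collapses.
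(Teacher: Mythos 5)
Your proposal is correct and follows essentially the same route as the paper: the same dyadic choice $\delta_{j}=2^{-j}$, the same nested sets $E_{j}=E_{p,q}(M,\delta_{j})$ with $\Sigma_{2}=\bigcup_{j}E_{j}$, and the same application of the tail bound (\ref{eq:Tchebyshev}) together with the strict positivity of $\sigma(p,\alpha)$ under $\alpha p<2$. Your remark on why the borderline case $\alpha p=2$ fails matches the role this hypothesis plays in the paper.
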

\begin{proof}
Let's fix a $M>0$. Similar to Lemma $\ref{lemma1}$, define $\delta_{j}=2^{-j}$, and define $E_{j}=E_{p,q}(M,\delta_{j})$. By the definition $E_{p,q}$ in $(\ref{def:space})$, we can still get $E_{j}\subset E_{j+1}$. Let's define $\Sigma_{2}=\bigcup_{j}E_{j}$. Since $\alpha q<2$, the index of time $\sigma(\alpha,q)>0$. Also with the inequality $(\ref{eq:Tchebyshev})$, we have
$$
\mathcal{P}(\Sigma_{1})\geq 1-\lim_{j\rightarrow\infty}\mathcal{P}(E_{j}^{c})\geq 1-\lim_{j\rightarrow\infty}\exp{\left(-\frac{M^{2}}{C^{2}_{\alpha,p}\delta_{j}^{2\sigma(\alpha,\delta)}}\right)}=1.
$$
We see, for every $\omega\in\Sigma_{2}$, there exists a $j$ such that $\omega\in E_{j}$. Let $\delta = 2^{-(j+1)}$, and $\omega\in E_{p,q}(M,\delta)$
\end{proof}
With the fact that the union of finite zero measure sets is also a zero measure set, we finish our probabilistic estimate by combining Lemma $\ref{lemma1}$ and Lemma $\ref{lemma2}$.
\begin{Proposition}
\label{prop:prob}
Let $0<\alpha <1$, and let $2\leq p_{1},p_{2},q_{1},q_{2}\leq r<\infty$ with $\alpha p_{1}\leq2$ and $\alpha p_{2}<2$. Then we can define a set $\Sigma\subset\Omega$ satisfying $\mathcal{P}(\Sigma)=1$, and for every $\lambda>0$ and every $\omega\in\Sigma$, we can choose a $\delta(\lambda)>0$ and $M(\lambda)>0$ such that
$$\omega\in E_{p_{1},q_{1}}(M,\delta)\cap E_{p_{2},q_{2}}(\lambda,\delta) $$
\end{Proposition}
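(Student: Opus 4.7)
The plan is to combine Lemmas \ref{lemma1} and \ref{lemma2} via a countable intersection of full-probability events and then exploit the elementary monotonicity of $E_{p,q}(\lambda,T)$: directly from the definition \eqref{def:space} one has $E_{p,q}(\lambda_{1}, T) \subset E_{p,q}(\lambda_{2}, T)$ whenever $\lambda_{1} \leq \lambda_{2}$, and $E_{p,q}(\lambda, T_{1}) \subset E_{p,q}(\lambda, T_{2})$ whenever $T_{2} \leq T_{1}$. These two facts are what allow us to interchange the quantifiers on $\lambda$, $\delta$, $M$ at the end.

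First I would apply Lemma \ref{lemma1} to the exponent pair $(p_{1},q_{1})$, using $\alpha p_{1} \leq 2$, with a fixed reference time $T_{0} = 1$. This produces a full-measure set $\Sigma_{1} \subset \Omega$ such that each $\omega \in \Sigma_{1}$ admits some $M_{\omega} > 0$ with $\omega \in E_{p_{1},q_{1}}(M_{\omega}, T_{0})$. Next, for each positive integer $k$, I would invoke Lemma \ref{lemma2} with the pair $(p_{2},q_{2})$ (for which $\alpha p_{2} < 2$) and the constant $M = 2^{-k}$, producing a probability-one set $\Sigma_{2}^{(k)}$ such that each $\omega \in \Sigma_{2}^{(k)}$ admits some $\delta_{k,\omega} > 0$ with $\omega \in E_{p_{2},q_{2}}(2^{-k}, \delta_{k,\omega})$. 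Define
$$\Sigma \;=\; \Sigma_{1} \cap \bigcap_{k=1}^{\infty} \Sigma_{2}^{(k)};$$
as a countable intersection of probability-one events, $\mathcal{P}(\Sigma) = 1$.

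Finally, given any $\omega \in \Sigma$ and any $\lambda > 0$, pick $k$ large enough that $2^{-k} \leq \lambda$; monotonicity in the first argument then upgrades the membership to $\omega \in E_{p_{2},q_{2}}(\lambda, \delta_{k,\omega})$. Set $\delta(\lambda) = \min(\delta_{k,\omega}, T_{0})$ and $M(\lambda) = M_{\omega}$. Monotonicity in the time variable immediately yields $\omega \in E_{p_{1},q_{1}}(M(\lambda),\delta(\lambda))$ (since $\delta(\lambda) \leq T_{0}$) and $\omega \in E_{p_{2},q_{2}}(\lambda,\delta(\lambda))$ (since $\delta(\lambda) \leq \delta_{k,\omega}$), which is exactly the claim. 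I do not anticipate any serious obstacle; the only point requiring care is that Lemma \ref{lemma2} produces an exceptional null set depending on the prescribed constant $M$, so to get a single $\omega$-set that works simultaneously for every $\lambda > 0$ one must prepare the countable intersection over the scales $M = 2^{-k}$ in advance, which is the essential organizational step.
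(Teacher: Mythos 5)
Your proof is correct, and it combines Lemma \ref{lemma1} and Lemma \ref{lemma2} with the monotonicity of the sets $(\ref{def:space})$ in essentially the same spirit as the paper. The organizational differences are worth noting, though. The paper fixes $\lambda$ at the outset, first extracts $\delta(\lambda,\omega)$ from Lemma \ref{lemma2}, and then applies Lemma \ref{lemma1} at that same $\delta$, packaging everything as the union over dyadic times $2^{-i}$ in $(\ref{def:sigma})$; read literally, the resulting $\Sigma$ still depends on the fixed $\lambda$, even though the proposition asserts a single full-measure set valid for every $\lambda>0$ simultaneously. Your extra step --- intersecting over the countable family of scales $M=2^{-k}$ in advance and then using monotonicity of $E_{p,q}(\lambda,T)$ in its first argument to upgrade $2^{-k}\le\lambda$ to arbitrary $\lambda$ --- is precisely what removes that dependence, so your argument actually proves the statement as written, whereas the paper's proof only does so after this (routine but unstated) repair. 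Your second deviation, applying Lemma \ref{lemma1} once at the reference time $T_{0}=1$ and shrinking via time-monotonicity rather than re-applying it at each candidate $\delta$, is harmless and slightly cleaner. Neither change affects the application in Proposition \ref{prop:311}, where only one fixed $\lambda$ is ever used, but your version is the more rigorous reading of the claim.
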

\begin{proof}
Let's fix a $\lambda>0$. we can define
$$
\Sigma_{2}=\bigcup_{j}E_{p_{2},q_{2}}(\lambda,2^{-j})
$$
Since $\alpha p_{2}<2$, by Lemma $\ref{lemma2}$, we have $\mathcal{P}(\Sigma_{2})=1$, and for every $\omega\in\Sigma_{2}$, we can find a $\delta$ depending on $\lambda$ such that $\omega\in E_{p_{2},q_{2}}(\lambda,\delta)$.
Now let's fix this $\delta$, and define
$$
\Sigma_{1}=\bigcup_{j}E_{p_{1},q_{1}}(2^{j},\delta)
$$
Since $\alpha p_{1}\leq 2$, by Lemma $\ref{lemma1}$, we have $\mathcal{P}(\Sigma_{1})=1$. Moreover, for every $\omega\in\Sigma_{1}$, we can find an $M>0$ such that $\omega\in E_{p_{1},q_{1}}(M,\delta)$. Now we define
$$
\Sigma = \Sigma_{1}\cap\Sigma_{2},
$$
which can be directly written as
\begin{equation}
\label{def:sigma}
\Sigma = \bigcup_{i=1}^{\infty}\Big[E_{p_{2},q_{2}}(\lambda,2^{-i})\bigcap\big(
\bigcup_{j=1}^{\infty}E_{p_{1},q_{1}}(2^{j},2^{-i})\big)\Big].
\end{equation}
We still have
$$
\mathcal{P}(\Sigma)\geq\mathcal{P}(\Sigma_{1})+\mathcal{P}(\Sigma_{2})-1=1
$$
Now for every $\omega\in\Sigma$ and every $\lambda>0$, we can find corresponding $\delta(\lambda,\alpha)$ and $M(\delta(\lambda,\alpha))$, such that
$$
\omega\in E_{p_{1},q_{1}}(M,\delta)\cap E_{p_{2},q_{2}}(\lambda,\delta)
$$
So the conclusion follows.
\end{proof}

\section{Energy Estimates}

Notice that the Navier-Stokes equation \eqref{eq:navier} enjoys the following natural scaling property: if $(u, p)$ is the solution of $(\ref{eq:navier})$, then the following transform
$$
u_{\lambda}(x,t)=\lambda u(\lambda x,\lambda^{2}t),\quad p_{\lambda}(x,t)=\lambda^2 p(\lambda x,\lambda^{2}t)
$$
gives another solution that satisfies this system with initial data $u_{0\lambda}(x)=\lambda u_{0}(\lambda x)$ for each $\lambda>0$.\\
\indent The spaces which are invariant under the above natural scaling are called critical spaces for the Navier-Stokes equations. For the 3D homogeneous Sobolev Spaces $\dot{H}^{s}$, the critical index $s_{c}=\frac{1}{2}$. If $s>s_{c}$ we call $\dot{H}^{s}$ subcritical, and $s<s_{c}$ we call it supercritical. Classical theory yields that if the initial data belongs to the critical
or subcritical Sobolev spaces, there exists a unique local strong solution to the Navier-Stokes equations \eqref{eq:navier}. If the initial data is in $\dot{H}^s$ for $0 \leq s < \frac{1}{2}$, Leray's result in \cite{LER} shows that one has at least one global weak solutions. If the initial data is in $H^{s}$ for $-\frac{1}{4}<s<0$, then the result in \cite{ASE} shows that almost surely the global weak solutions still exist. \\
\indent Let $v = u-g = u-e^{t\Delta}f^{\omega}$, then $v$ satisfies the following equations.
\begin{equation}
\label{eq:nonlinear}
\begin{cases}
\begin{aligned}
\partial_{t}v=\Delta v&-[\pp{P}\nabla\cdot(v\otimes v)+\pp{P}\nabla\cdot(v\otimes g)\\ &+\pp{P}\nabla\cdot(g\otimes v)+\pp{P}\nabla\cdot(g\otimes g)],
\end{aligned}\\
\nabla\cdot v=0,\\
v(x,0)=0.
\end{cases}
\end{equation}
We first derive the \textit{apriori} local-in-time energy estimate for $(\ref{eq:nonlinear})$ in the whole space case. The proof for the periodic case is similar and we omit it.
\begin{Proposition}
\label{prop:311}
Let $g=e^{t\Delta}f^{\omega}$, $0<\alpha\leq\frac{1}{2}$. Then there exists a set $\Sigma\subset\Omega$ with $\mathcal{P}(\Sigma)=1$, and for every $\omega\in\Sigma$ there exists $\lambda>0$ and a corresponding $0<\delta(\lambda)\leq 1$ such that the energy function $(\ref{energy})$ has an uniform bound,
\begin{equation}
E(v,\tau)\leq C(\alpha,\norm{f}_{\dot{H}^{-\alpha}(\bb{R}^{3})}),\quad \forall\ t\in(0,\delta],
\end{equation}
for all smooth solutions $v\in L^{\infty}([0,1],L^{2}(\bb{R}^{3}))\cap L^{2}([0,1],\dot{H}^{1}(\bb{R}^{3}))$ of $(\ref{eq:nonlinear})$.
\end{Proposition}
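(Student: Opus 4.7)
The approach is the standard $L^{2}$ energy method applied to \eqref{eq:nonlinear}: test against $v$ itself. The cubic self-interaction $\int v\cdot\pp{P}\nabla\cdot(v\otimes v)\,dx$ vanishes by $\nabla\cdot v=0$, and since $v$ is divergence-free, $\pp{P}$ can be dropped from the remaining terms and integration by parts brings the identity to
$$
\frac{1}{2}\frac{d}{dt}\norm{v}_{L^{2}}^{2}+\norm{\nabla v}_{L^{2}}^{2}
= \int(v\otimes g){:}\nabla v\,dx + \int(g\otimes v){:}\nabla v\,dx + \int(g\otimes g){:}\nabla v\,dx.
$$
The strategy is to dominate the right side by $\tfrac{1}{2}\norm{\nabla v}_{L^{2}}^{2}$ plus quantities that are integrable in $t$, absorb half the dissipation on the left, and close via Gr\"onwall. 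Two space-time norms of $g$ will drive the estimate: $\norm{g}_{L^{4}_{t}L^{4}_{x}}$, which will be finite but not small, and $\norm{g}_{L^{3}_{t}L^{9}_{x}}$, which will be small on a short time window.

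For each mixed term, H\"older with the splitting $\tfrac{1}{2}=\tfrac{7}{18}+\tfrac{1}{9}$ places $v\in L^{18/7}_{x}$ and $g\in L^{9}_{x}$; three-dimensional Gagliardo-Nirenberg then gives $\norm{v}_{L^{18/7}_{x}}\lesssim\norm{v}_{L^{2}}^{2/3}\norm{\nabla v}_{L^{2}}^{1/3}$, so
$$
\Bigl|\int(v\otimes g){:}\nabla v\,dx\Bigr|
\lesssim \norm{v}_{L^{2}}^{2/3}\norm{\nabla v}_{L^{2}}^{4/3}\norm{g}_{L^{9}_{x}}
\le \varepsilon\norm{\nabla v}_{L^{2}}^{2}+C_{\varepsilon}\norm{g}_{L^{9}_{x}}^{3}\norm{v}_{L^{2}}^{2},
$$
by Young's inequality with conjugate exponents $(\tfrac{3}{2},3)$, with the identical bound for $\int(g\otimes v){:}\nabla v\,dx$. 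For the purely forcing $g\otimes g$ term, H\"older with $L^{4}_{x}\cdot L^{4}_{x}\cdot L^{2}_{x}$ gives $\bigl|\int(g\otimes g){:}\nabla v\,dx\bigr|\le\norm{g}_{L^{4}_{x}}^{2}\norm{\nabla v}_{L^{2}}\le\varepsilon\norm{\nabla v}_{L^{2}}^{2}+C_{\varepsilon}\norm{g}_{L^{4}_{x}}^{4}$.

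Choosing $\varepsilon$ small, using $v(\cdot,0)=0$, and integrating in $t$ yields a differential inequality of the form $y'(t)+\tfrac{1}{2}z(t)\le C\norm{g(t)}_{L^{9}}^{3}y(t)+C\norm{g(t)}_{L^{4}}^{4}$ with $y=\norm{v}_{L^{2}}^{2}$ and $z=\norm{\nabla v}_{L^{2}}^{2}$; Gr\"onwall then produces
$$
E(v,t)\le C\norm{g}_{L^{4}_{t}L^{4}_{x}([0,t])}^{4}\bigl(1+\norm{g}_{L^{3}_{t}L^{9}_{x}([0,t])}^{3}\bigr)\exp\bigl(C\norm{g}_{L^{3}_{t}L^{9}_{x}([0,t])}^{3}\bigr).
$$
To close, apply Proposition \ref{prop:prob} with $(p_{1},q_{1})=(4,4)$, for which $\alpha p_{1}\le 2$ (saturated exactly at $\alpha=\tfrac{1}{2}$), and $(p_{2},q_{2})=(3,9)$, for which $\alpha p_{2}<2$. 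This produces a full-measure event $\Sigma$ on which, for any prescribed $\lambda>0$, there exist $\delta(\lambda)\in(0,1]$ and $M>0$ with $\norm{g}_{L^{4}_{t}L^{4}_{x}([0,\delta])}\le M\norm{f}_{\dot{H}^{-\alpha}}$ and $\norm{g}_{L^{3}_{t}L^{9}_{x}([0,\delta])}\le\lambda\norm{f}_{\dot{H}^{-\alpha}}$, delivering the advertised uniform bound $E(v,t)\le C(\alpha,\norm{f}_{\dot{H}^{-\alpha}})$ on $(0,\delta]$.

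The principal obstacle is the tight scaling at the endpoint $\alpha=\tfrac{1}{2}$: since $\alpha\cdot 4=2$ is exactly the saturating case of Proposition \ref{lem:est}, $\norm{g}_{L^{4}_{t}L^{4}_{x}}$ can only be kept finite and cannot be made small by shrinking $\delta$, so all genuine $\delta$-smallness must flow through $\norm{g}_{L^{3}_{t}L^{9}_{x}}$, where $\alpha\cdot 3=\tfrac{3}{2}<2$ is strict and Lemma \ref{lemma2} does supply decay in $\delta$. The interpolation pair $(L^{18/7}_{x},L^{9}_{x})$ is precisely calibrated so that the exponent on $\norm{\nabla v}_{L^{2}}$ after Gagliardo-Nirenberg is $\tfrac{4}{3}<2$, just enough for Young's inequality to peel off a piece of the dissipation while leaving the remaining $v$-factor at the squared $L^{2}$ level, which is what makes the Gr\"onwall closure work all the way up to $\alpha=\tfrac{1}{2}$.
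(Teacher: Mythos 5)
Your proof is correct and follows essentially the same route as the paper: the same $L^2$ energy identity, the same H\"older splitting $L^{18/7}_{x}\times L^{9}_{x}$ with Gagliardo--Nirenberg interpolation of $v$, and the same probabilistic input from Proposition \ref{prop:prob} with the exponent pairs $(4,4)$ and $(3,9)$. The only cosmetic differences are that you close via Young's inequality and Gr\"onwall on the differential form rather than absorbing the small $L^{3}_{t}L^{9}_{x}$ factor after taking the supremum of $E$, and that you estimate the $v\otimes g$ term instead of observing that it vanishes by the divergence-free property of $g$; neither affects the argument.
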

\begin{proof}
We make standard estimate in the energy space. For $0<t\leq\delta$, which $\delta$ to be determined later, we multiply $v$ to both side of $(\ref{eq:nonlinear})$,  by standard energy estimate in time period $[0,t]$ we get
\begin{align*}
E(v,t)=&\int_{0}^{t}\Big\{\int_{\bb{R}^{3}}2vv_{t}dx+2\norm{\nabla v}^{2}_{L^{2}(\bb{R}^{3})}\Big\}d\tau\\
=&\int_{0}^{t}\Big\{\int_{\bb{R}^{3}}2v\Delta vdx+2\norm{\nabla v}^{2}_{L^{2}(\bb{R}^{3})}-2\int_{\bb{R}^{3}}v\cdot\pp{P}\nabla\cdot(v\otimes v)dx\\
&-\Big(2\int_{\bb{R}^{3}}v\cdot\pp{P}\nabla\cdot(v\otimes g)dx+2\int_{\bb{R}^{3}}v\cdot \pp{P}\nabla\cdot(g\otimes v)dx\\
&+2\int_{\bb{R}^{3}}v\cdot \pp{P}\nabla\cdot(g\otimes g)dx\Big)\Big\}d\tau
\end{align*}
We note that
$$
\int_{\bb{R}^{3}}\left(2v\Delta v\right)dx+2\norm{\nabla v}^{2}_{L^{2}(\bb{R}^{3})}=0.
$$
Using the divergence free property of $v$, we also have
$$
\int_{\bb{R}^{3}}v\cdot\pp{P}\nabla\cdot(v\otimes v)dx=\int_{\bb{R}^{3}}v\cdot\nabla\frac{1}{2}|v|^{2}dx=0.
$$
Similarly,
$$
\int_{\bb{R}^{3}}v\cdot\pp{P}\nabla\cdot(v\otimes g)dx=\int_{\bb{R}^{3}}g\cdot\nabla\frac{1}{2}|v|^{2}dx=0.
$$
Hence, by using integration by parts, we have
\begin{equation}
\label{ensimply}
\begin{aligned}
E(v,t)=&-2\int_{0}^{t}\Big(\int_{\bb{R}^{3}}v\cdot \pp{P}\nabla(g\otimes v)dx
+\int_{\bb{R}^{3}}v\cdot \pp{P}\nabla(g\otimes g)dx\Big)d\tau\\
=&2\int_{0}^{t}\Big(\int_{\bb{R}^{3}}\nabla v: (g\otimes v)dx
+\int_{\bb{R}^{3}}\nabla v:(g\otimes g)dx\Big)d\tau\\
\leq&2\norm{\nabla v}_{L^{2}([0,t];L^{2}_{x})}\left(\norm{v\otimes g}_{L^{2}([0,t];L^{2}_{x})}+\norm{g\otimes g}_{L^{2}([0,t];L^{2}_{x})}\right).
\end{aligned}
\end{equation}

The last term in the above bracket is simply estimated as follows:
$$
\norm{g\otimes g}_{L^{2}([0,t],L^{2}(\bb{R}^{3}))}\leq \norm{g}^{2}_{L^{4}([0,t],L^{4}(\bb{R}^{3}))}.
$$
To estimate the first term in the bracket of the right hand side of $(\ref{ensimply})$, we first apply the H\"{o}lder inequality to derive that
$$
\norm{v\otimes g}_{L^{2}([0,t],L^{2}(\bb{R}^{3}))}\leq \norm{v}_{L^{6}([0,t],L^{\frac{18}{7}}(\bb{R}^{3}))}\norm{g}_{L^{3}([0,t],L^{9}(\bb{R}^{3}))}.
$$
Apply the interpolation inequality and Sobolev embedding, we have
$$
\norm{v}_{L^{6}([0,t],L^{18/7}(\bb{R}^{3}))}\leq \norm{v}_{L^{\infty}([0,t],L^{2}(\bb{R}^{3}))}^{\frac{2}{3}}\norm{v}_{L^{2}([0,t],L^{6}(\bb{R}^{3}))}^{\frac{1}{3}}
\leq \sup_{0\leq s\leq t}E^{\frac{1}{2}}(v,s).
$$
Consequently, we arrive at
\begin{equation}
\label{eq:335}
\begin{aligned}
\sup_{0\leq s\leq t}E(v,s)\leq& C\big(\sup_{0\leq s\leq t}E(v,s)\norm{g}_{L^{3}([0,t],L^{9}(\bb{R}^{3}))}\\
+&\sup_{0\leq s\leq t}E^{\frac{1}{2}}(v,s)
\norm{g}^{2}_{L^{4}([0,t],L^{4}(\bb{R}^{3}))}\big).
\end{aligned}
\end{equation}

Now our probabilistic estimate comes in. Let $(p_{1},q_{1})=(4,4)$, $(p_{2},q_{2})=(3,9)$. Since $0<\alpha\leq \frac{1}{2}$, it is clear that $\alpha p_{1}\leq 2$ and $\alpha p_{2}<2$. Let's choose a $\lambda$ small enough such that $C\lambda\norm{f}_{\dot{H}^{-\alpha}}\leq 1/2$. Now for those $(\alpha, p_{1}, q_{1}, p_{2}, q_{2}, \lambda)$, we apply Proposition $\ref{prop:prob}$ to conclude that for every $\omega\in\Sigma$ defined in Proposition $\ref{prop:prob}$, there exist $\delta(\lambda)>0$ and $M(\lambda)>0$ such that
$$\omega\in E_{3,9}(\lambda,\delta)\cap E_{4,4}(M,\delta).$$
For $t\leq\delta$, we have the following estimate
\begin{equation}
\label{eq:cases}
\begin{cases}
C\norm{g}_{L^{3}([0,t],L^{9}(\bb{R}^{3}))}\leq C\lambda\norm{f}_{\dot{H}^{-\alpha}(\bb{R}^{3})}\leq\frac{1}{2},\\
C\norm{g}_{L^{4}([0,t],L^{4}(\bb{R}^{3}))}\leq CM\norm{f}_{\dot{H}^{-\alpha}(\bb{R}^{3})}.
\end{cases}
\end{equation}
Thanks to the estimate $(\ref{eq:335})$ and the estimate $(\ref{eq:cases})$, the local energy estimate gets bounded:
$$
E(v,\tau)\leq C(\alpha,\norm{f}_{\dot{H}^{-\alpha}(\bb{R}^{3})}),(0<\tau\leq\delta).
$$
\end{proof}
\begin{Remark}
\label{remark}
Note that $\lambda$ in the above proposition depends only on $\norm{f}_{\dot{H}^{-\alpha}}$. Hence by Proposition $\ref{prop:prob}$, we see that $\delta$ in Proposition $\ref{prop:311}$ also only depends on $\alpha$ and $\norm{f}_{\dot{H}^{-\alpha}}$.
\end{Remark}

We will prove the existence of $v$ on a short time interval $[0, \delta]$ using the \textit{apriori} estimate in Proposition $\ref{prop:311}$ in next section. Now let us consider the global existence of weak solutions $u$ by assuming the existence of $v$ on $[0,\delta]$.
\begin{proof}[The proof of Theorem $\ref{thm}$]
From Proposition $\ref{prop:311}$ we have that $v\in L^{\infty}([0,\delta];L^{2})\cap L^{2}([0,\delta];\dot{H}^{1})$. So for almost every $\tau\in[\frac{\delta}{2},\delta]$, we have $v(\cdot,\tau)\in L^{2}\cap \dot{H}^{1}$. On the other hand, let's recall the deterministic estimate of $(\ref{determ})$
\begin{equation}
\label{eq:global2}
\norm{g(\cdot,\tau)}_{L^{2}}\leq C\tau^{-\frac{\alpha}{2}}\norm{f}_{\dot{H}^{-\alpha}}, \quad \norm{g(\cdot,\tau)}_{\dot{H}^{1}}\leq C\tau^{-\frac{1+\alpha}{2}}\norm{f}_{\dot{H}^{-\alpha}}.
\end{equation}
We can see $g(\tau,\cdot)\in L^{2}\cap \dot{H}^{1}$. Hence $u(\tau,\cdot)=g(\tau,\cdot)+v(\tau,\cdot)$ is also in the $L^{2}\cap \dot{H}^{1}$ for a.e. $\tau\in [\frac{\delta}{2},\delta]$. \\

Now let's take a $\delta/2<\tau_{1}<\tau_{2}<\delta$ such that $u(\cdot,\tau_{1})\in L^{2}\cap \dot{H}^{1}$, $u(\cdot,\tau_{2})\in L^{2}\cap \dot{H}^{1}$ and $u$ is a weak solution of the Navier-Stokes equations on $[\tau_{1},\tau_{2}]$. Moreover, there exists a $\epsilon_{0}>0$ such that $u$ is smooth on the interval $(\tau_{1}, \tau_{1}+\epsilon_{0})$.

\indent Let's consider the Navier-Stokes equation with initial time $\tau=\tau_{1}+\frac{1}{2}\epsilon_{0}$:
\begin{equation}
\label{eq:initialtime}
\begin{cases}
\partial_{t}u=\Delta u-\pp{P}\nabla\cdot(u\otimes u),\\
\nabla\cdot u=0,\\
u(x,\tau)=g(x,\tau)+v(x,\tau).
\end{cases}
\end{equation}
It is clear that if we can extend $u$ on $[\tau,\tau_{2}]$ to be a global weak solution on $[\tau, T]$, then $u$ will be a global weak solution on $(0,T]$ for an arbitrary $T>0$. Indeed, by the classical result of Leray $\cite{LER}$, we can have at least one weak solution in the space $L^{\infty}([\tau,T],L^{2})\cap L^{2}([\tau,T],\dot{H}^{1})$ which satisfies
$$
\norm{u(\cdot, t)}_{L^{2}_{x}}^{2}+2\int_{\tau}^{t}\norm{\nabla u(\cdot, s)}^{2}_{L^{2}_{x}}ds\leq \norm{u(\cdot, \tau)}_{L^{2}_{x}}^{2}.
$$
This gives the global existence of weak solution $u$ to the Navier-Stokes equations. Moreover, for all $t\in [\tau,T]$, we have
\begin{equation}
\label{eq:global1}
\begin{aligned}
E(u,t)\leq& \norm{u(\cdot,\tau)}_{L^{2}}\leq \norm{g(\cdot,\tau)}_{L^{2}}+\norm{v(\cdot,\tau)}_{L^{2}}\\
\leq& E(v,\tau)+C\tau^{-\frac{\alpha}{2}}\norm{f}_{\dot{H}^{-\alpha}}
=C(\alpha,\norm{f}_{\dot{H}^{-\alpha}})
\end{aligned}
\end{equation}
We remark that the bound in the above estimate only depends on $\delta$ which only depends on $\alpha$ and $\norm{f}_{\dot{H}^{-\alpha}}$ due to Remark $\ref{remark}$.
Now for every $\delta_{0}>0$ and every $t\in [\delta_{0},\tau]$,
$$E(u,t)\leq E(v,t)+E(g,t)\leq C(\delta_{0},\alpha,\norm{f}_{\dot{H}^{-\alpha}})$$

On the other hand, by Proposition $\ref{prop:311}$, it is clear that for $t\in [0,\tau]$ we have
$$E(v,t)\leq C(\alpha,\norm{f}_{\dot{H}^{-\alpha}}).$$
For $t\in [\tau,T]$, we have
$$E(v,t)\leq E(u,t)+E(g,t).$$
Hence combine $(\ref{eq:global2})$ and $(\ref{eq:global1})$. We arrive at a bound for the energy of $v$ which is independent of $\delta_{0}$:
$$E(v,t)\leq C(\alpha,\norm{f}_{\dot{H}^{-\alpha}}),\quad \forall\ t\in [0,T].$$
\end{proof}

\section{Construction of the Weak Solution}
In this section we construct the local weak solution $v$ to the perturbed Navier-Stokes equation $(\ref{eq:nonlinear})$ on using the \textit{apriori} estimate in Proposition $(\ref{prop:311})$, which is sufficient to close the proof of Theorem $\ref{thm}$ due to the discussions in section 3.

We use smooth approximation method to construct the local weak solution to the system $(\ref{eq:nonlinear})$. For any given $N>0$, we consider the following smoothed equation of $(\ref{eq:nonlinear})$ using projector $P_{\leq N}$:
\begin{equation}
\label{smooth}
\begin{cases}
\begin{aligned}
\partial_{t}v=\Delta P_{\leq N}^{2}v&-[\pp{P}\nabla\cdot P_{\leq N}(P_{\leq N}v\otimes P_{\leq N}v)+\pp{P}\nabla\cdot P_{\leq N}(P_{\leq N}v\otimes g)\\ &+\pp{P}\nabla\cdot P_{\leq N}(g\otimes P_{\leq N}v)+\pp{P}\nabla\cdot P_{\leq N}(g\otimes g)],
\end{aligned}\\
v(x,0)=0.
\end{cases}
\end{equation}
Here the smoothing projector $P_{\leq N}$ is defined in section 2.
Taking the Fourier transform to the equation $(\ref{eq:nonlinear})$, we have
\begin{equation}
\label{eq:galerkin}
\begin{cases}
\begin{aligned}
\frac{d}{dt}\widehat{v}(\xi,t)=&-|\xi|^{2}\widehat{P_{\leq N}^{2}v}(\xi,t)
-\left(I-\frac{\xi\xi^{T}}{|\xi|^{2}}\right)
\mathcal{F}\Big\{\nabla\cdot P_{\leq N}(P_{\leq N}v\otimes P_{\leq N}v)\\
&+\nabla\cdot P_{\leq N}(P_{\leq N}v\otimes g)+\nabla\cdot P_{\leq N}(g\otimes P_{\leq N}v)
+\nabla\cdot P_{\leq N}(g\otimes g)\Big\},
\end{aligned}\\
\widehat{P_{\leq N}}v(\xi,0)=0.
\end{cases}
\end{equation}
Here we dropped the dependence of $v$ on $N$ for notational convenience.

Below we are going to prove, for each fixed $N>0$, the local-in-time existence of the solution to this nonlinear ODE equation $(\ref{eq:galerkin})$ with the assumption that the force term $g$ has a good regularity.

\begin{Lemma}
Let $0<\alpha<1$, and $C_{0}>0$ be given and $f\in \dot{H}^{-\alpha}$. Assume that $g$ satisfies
\begin{equation}
\label{section4}
\begin{cases}
\norm{g(\cdot,t)}_{L^{2}_{x}}\leq C_{0}t^{-\frac{\alpha}{2}}\norm{f}_{\dot{H}^{-\alpha}},\\
\norm{g}_{L^{4}([0,t],L^{4}_{x})}\leq C_{0}\norm{f}_{\dot{H}^{-\alpha}}.
\end{cases}
\end{equation}
We define the space
$$X_{t}=C([0,t],L^{2}_{\xi})\cap L^{2}([0,t],|\xi|L^{2}_{\xi})$$
with the norm
$$
\norm{h}_{X_{t}}= \sup_{t}\norm{h}_{L^{2}_{\xi}}+\norm{\xi h}_{L^{2}_{t}L^{2}_{\xi}}.
$$
Then there exists a $\delta>0$ such that the truncated ODE system $(\ref{eq:galerkin})$ has a unique solution in $X_{\delta}$.
\end{Lemma}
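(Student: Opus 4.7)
The plan is to apply the Banach fixed point theorem to a Duhamel reformulation of $(\ref{eq:galerkin})$ in the space $X_\delta$, for $\delta$ chosen sufficiently small (depending on $N$, $C_0$, and $\norm{f}_{\dot{H}^{-\alpha}}$). First I would observe that since the initial datum is zero and every term on the right-hand side of $(\ref{eq:galerkin})$ carries an outer $P_{\leq N}$ projector, the Fourier support of $\widehat v(\cdot,t)$ stays inside $\{|\xi|\leq 2N\}$ for all $t\geq 0$. Hence the linear evolution is generated by the bounded symbol $e^{-(t-s)|\xi|^2\rho(\xi/N)^2}$, which is strictly dissipative on $\{|\xi|\leq N\}$. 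The resulting integral equation can be written as
$$
v(t) \;=\; -\int_0^t e^{-(t-s)|\xi|^2 \rho(\xi/N)^2}\,\widehat{\mathcal{N}[v,g]}(s)\,ds \;=:\; \Phi(v)(t),
$$
where $\mathcal{N}[v,g]$ collects the three Leray-projected bilinear/source terms appearing on the right of $(\ref{smooth})$.

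I would then estimate $\norm{\Phi(v)}_{X_\delta}$ by exploiting the fixed low-frequency localization. Bernstein's inequality gives $\norm{P_{\leq N}w}_{L^\infty_x}\leq CN^{3/2}\norm{w}_{L^2_x}$, so the quadratic term satisfies $\norm{\nabla\cdot P_{\leq N}(P_{\leq N}v\otimes P_{\leq N}v)}_{L^2_x}\leq CN^{5/2}\norm{v}_{L^2_x}^2$, which integrates to a contribution $\leq C\delta N^{5/2}\norm{v}_{X_\delta}^2$. For the cross terms $P_{\leq N}v\otimes g$ and $g\otimes P_{\leq N}v$, H\"older combined with Bernstein yields the pointwise-in-time bound $CN^{5/2}\norm{v(s)}_{L^2_x}\norm{g(s)}_{L^2_x}$, and the assumption $\norm{g(\cdot,s)}_{L^2_x}\leq C_0 s^{-\alpha/2}\norm{f}_{\dot{H}^{-\alpha}}$ with $\alpha<1<2$ makes this time-integrable on $[0,\delta]$. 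For the pure $g\otimes g$ forcing I would use $\norm{g\otimes g}_{L^2_tL^2_x}\leq \norm{g}_{L^4_tL^4_x}^2\leq C_0^2\norm{f}_{\dot{H}^{-\alpha}}^2$. The $\norm{\xi\,\widehat{\Phi(v)}}_{L^2_tL^2_\xi}$ part of the $X_\delta$-norm is recovered from the dissipation on $\{|\xi|\leq N\}$ and the trivial bound $|\xi|\leq 2N$ on the remaining frequency shell.

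Assembling these estimates yields $\norm{\Phi(v)}_{X_\delta}\leq A+B\delta^{\gamma}\bigl(\norm{v}_{X_\delta}+\norm{v}_{X_\delta}^2\bigr)$ for constants $A,B$ depending on $N,C_0,\norm{f}_{\dot{H}^{-\alpha}}$ and some $\gamma>0$. Fixing a ball of radius $R\sim 2A$ in $X_\delta$ and then choosing $\delta$ sufficiently small makes $\Phi$ map $B_R$ into itself; an analogous bilinear estimate for $\Phi(v_1)-\Phi(v_2)$ shows $\Phi$ is a contraction on $B_R$, and Banach's fixed point theorem then delivers the unique solution $v\in X_\delta$. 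The main obstacle I anticipate is bookkeeping the time singularity $s^{-\alpha/2}$ of $\norm{g(s)}_{L^2_x}$ near $s=0$; this is handled by placing the cross terms into $L^1_tL^2_x$ (integrable since $\alpha<2$) and using the $L^4_tL^4_x$ hypothesis to control the $g\otimes g$ source, rather than any crude $L^\infty_tL^2_x$ bound on $g$.
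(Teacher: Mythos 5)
Your proposal is correct and follows essentially the same route as the paper: a Banach fixed point argument in $X_\delta$, with the quadratic term controlled by an $N^{5/2}$ Bernstein-type gain from the frequency truncation, the cross terms made time-integrable via $\norm{g(\cdot,s)}_{L^2_x}\lesssim s^{-\alpha/2}$, and the $g\otimes g$ forcing handled through the $L^4_tL^4_x$ hypothesis. The only cosmetic difference is that you use a Duhamel formulation with the truncated heat semigroup, whereas the paper simply integrates the ODE in time and bounds the dissipative term crudely by $N^2 t\norm{\widehat v}_{L^2_\xi}$; both yield the same smallness in $\delta$.
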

\begin{proof}
In this proof we apply the fixed point argument. Let's define the map $I$ via the equation $(\ref{eq:galerkin})$:
\begin{align*}
I(\widehat{v}(\xi,t))=&\int_{0}^{t}-|\xi|^{2}\widehat{P^{2}_{\leq N}v}(\xi,s)+\left(I-\frac{\xi\xi^{T}}{|\xi|^{2}}\right)
\rho(\xi/N)\big(\xi\cdot\mathcal{F}(P_{\leq N}v\otimes P_{\leq N}v)\\
&+\xi\cdot\mathcal{F}(P_{\leq N}v\otimes g)+\xi\cdot\mathcal{F}(g\otimes P_{\leq N}v)+\xi\cdot\mathcal{F}(g\otimes g)\big)ds
\end{align*}

With a direct calculation we can have
$$
\norm{\int_{0}^{t}-|\xi|^{2}\widehat{P_{\leq N}^{2}v}(\xi,s)ds}_{L^{2}_{\xi}}\leq N^{2}t\norm{\widehat{v}(\xi,s)}_{L^{2}_{\xi}}.
$$
Using the Young inequality, we get
\begin{align*}
&\norm{\int_{0}^{t}\rho(\xi/N)\xi\cdot\mathcal{F}(P_{\leq N}v\otimes P_{\leq N}v)ds}_{L^{2}_{\xi}}\\
\leq& N\int_{0}^{t}\norm{\widehat{P_{\leq N}v}}_{L^{1}_{\xi}}\norm{\widehat{P_{\leq N}v}}_{L^{2}_{\xi}}ds\\
\leq& N^{5/2}t\norm{\widehat{v}}_{L^{\infty}([0,t],L^{2}_{\xi})}^{2}.
\end{align*}

Notice the assumption $(\ref{section4})$, $\norm{g(\cdot,t)}_{L^{2}}\leq Ct^{-\frac{\alpha}{2}}\norm{f}_{\dot{H}^{-\alpha}}$. Along with the Minkowski inequality and Plancherel equality we can estimate that
\begin{align*}
&\norm{\int_{0}^{t}\rho(|\xi|/N)\xi\cdot\mathcal{F}(P_{\leq N}v\otimes g+g\otimes P_{\leq N}v)ds}_{L^{2}_{\xi}}\\
\leq& 2N\int_{0}^{t}\norm{\widehat{g}}_{L^{2}_{\xi}}\norm{\widehat{P_{\leq N}v}}_{L^{1}_{\xi}}ds\\
\leq &2N^{\frac{5}{2}}\norm{\widehat{v}}_{L^{\infty}([0,t],L^{2}_{\xi})}\norm{g}_{L^{1}_{t}L^{2}_{x}}\\
\leq& 2N^{\frac{5}{2}}\norm{\widehat{v}}_{L^{\infty}([0,t],L^{2}_{\xi})}t^{1-\frac{\alpha}{2}}\norm{f}_{\dot{H}^{-\alpha}}.
\end{align*}
Together with the assumption $\norm{g}_{L^{4}([0,t],L^{4}_{x})}\leq C\norm{f}_{\dot{H}^{-\alpha}}$ in $(\ref{section4})$, also with the help of Plancherel Equality we have
\begin{align*}
&\norm{\int_{0}^{t}\rho(|\xi|/N)\xi\cdot\widehat{g\otimes g}ds}_{L^{2}_{\xi}}\leq N\int_{0}^{t}\norm{\widehat{g\otimes g}}_{L^{2}_{\xi}}ds\\
&\leq N\int_{0}^{t}\norm{g\otimes g}_{L^{2}_{x}}ds
\leq N\int_{0}^{t}\norm{g}^{2}_{L^{4}_{x}}ds\\
&\leq Nt^{\frac{1}{2}}\norm{g}^{2}_{L^{4}L^{4}}
\leq NC_{0}^{2}\norm{f}^{2}_{\dot{H}^{-\alpha}}t^{\frac{1}{2}}.
\end{align*}

Now we have the $L^{\infty}_{t}L^{2}_{\xi}$ norm estimate of $I(\widehat{v})$.
\begin{align*}
\norm{I(\widehat{v}(\xi,s))}_{L^{\infty}([0,t],L^{2}_{\xi})}\leq& C\{N^{2}t\norm{\widehat{v}}_{L^{\infty}([0,t],L^{2}_{\xi})}
+N^{\frac{5}{2}}t\norm{\widehat{v}}_{L^{\infty}([0,t],L^{2}_{\xi})}^{2}\\
&+N^{\frac{5}{2}}t^{1-\frac{\alpha}{2}}C_{0}\norm{f}_{\dot{H}^{-\alpha}}\norm{\widehat{v}}_{L^{\infty}([0,t],L^{2}_{\xi})}+
NC_{0}^{2}t^{\frac{1}{2}}\norm{f}^{2}_{\dot{H}^{-\alpha}}\}
\end{align*}
By similar argument, we can have the $L^{2}([0,t],\xi L^{2}_{\xi})$ estimate
\begin{align*}
\norm{\xi I(\widehat{v}(\xi,s))}_{L^{2}([0,t],L^{2}_{\xi})}\leq& C\{N^{3}t^{\frac{3}{2}}\norm{\widehat{v}}_{L^{\infty}([0,t],L^{2}_{\xi})}
+N^{\frac{7}{2}}t^{\frac{3}{2}}\norm{\widehat{v}}_{L^{\infty}([0,t],L^{2}_{\xi})}^{2}\\
&+N^{\frac{7}{2}}t^{\frac{3-\alpha}{2}}C_{0}\norm{f}_{\dot{H}^{-\alpha}}\norm{\widehat{v}}_{L^{\infty}([0,t],L^{2}_{\xi})}+
N^{2}C_{0}^{2}t\norm{f}^{2}_{\dot{H}^{-\alpha}}\}
\end{align*}
By the result of the last two estimates we can see that $I$ is a continuous map from a ball $\{f\in X_{t}:\norm{f}_{X_{t}}\leq M_{0}\norm{f}^{2}_{\dot{H}^{-\alpha}}\}$ into itself if $t$ is small enough. Moreover, it is easy to see that we can choose an even smaller $t_{1}=t_{1}(N)$ such that $I$ is a contraction in this ball. Hence by the fixed point theorem there exists an unique point which we still denote as $\widehat{v}$ such that $I(\widehat{v})=\widehat{v}$, which is of course the solution to the equation $(\ref{eq:galerkin})$ on the time period $[0,t_{1}]$ for fixed $N>0$.
\end{proof}

Now we can show the existence of the local weak solution to the perturbed Navier-Stokes equation $(\ref{eq:nonlinear})$. We let $g=e^{t\Delta}f^{\omega}$, and let $\Sigma$ be the set defined in Proposition $\ref{prop:prob}$. Then for any $\omega\in\Sigma$, we have  $\norm{g}_{L^{4}_{t}L^{4}_{x}}\leq C\norm{f}_{\dot{H}^{-\alpha}}$. On the other hand, by the deterministic estimate $(\ref{determ})$ of $g$ we can also have $\norm{g(\cdot,t)}_{L^{2}}\leq Ct^{-\frac{\alpha}{2}}\norm{f}_{\dot{H}^{-\alpha}}$. Hence $g$ satisfies the assumption $(\ref{section4})$. Thus by Lemma 4.1, for any fixed $N>0$, there exists a $t_{1}>0$ and a solution $\widehat{v}$ for the system $(\ref{eq:galerkin})$ in $X_{t_{1}}$. This provides a solution $v^{(N)}$ to the truncated system $(\ref{smooth})$ which depends on $N$ and is an approximated solution of equation $(\ref{eq:nonlinear})$.\\

We are going to show the existence of a local weak solution to the equation $(\ref{eq:nonlinear})$ by the approximated solution sequence $\{v^{(N)}\}$. Define the energy space
$$
\pp{E}(t)=L^{\infty}([0,t],L^{2})\cap L^{2}([0,t],\dot{H}^{1}),
$$
with the norm $\norm{v}_{\pp{E}(t)}=E(v,t).$
By a similar argument as in deriving the \textit{apriori} energy estimate in section 3,
$v^{(N)}$ can be extended to $[0,T]$ for all $T>0$, and we have
\begin{equation}
\label{eq:vn}
E(v^{(N)}, t)\leq C(\alpha,\norm{f}_{\dot{H}^{-\alpha}}),\quad \forall\ t\in(0,T].
\end{equation}

Now let's fix the time $T$, and estimate $\norm{\partial_{t}v^{(N)}}_{L^2_tH^{-2}_{\rm loc}}$. Let $B$ be an arbitrary bounded domain with smooth boundary in $\mathbb{R}^{3}$. By the definition of negative-order Sobolev space, one has
$$
\norm{\partial_{t}v^{(N)}}_{L^2([0,T];H^{-2}(B))}=\sup_{\substack{
\phi\in L^{2}_{t}H^{2}_{0}(B)\\
\nabla\cdot\phi=0
}}\int_{0}^{T}\int_{B}\partial_{t}v^{(N)}\phi dxdt.
$$
It is easy to see that
$$
\int_{0}^{T}\int_{B}\Delta P_{\leq N}^{2}v\phi dxdt\leq \norm{\nabla P_{\leq N}v}_{L^{2}L^{2}}\norm{\phi}_{L^{2}\dot{H}^{1}}\leq \sup_{t\in [0,T]}E(P_{\leq N}v,t)\norm{\phi}_{L^{2}\dot{H}^{1}}.
$$
We note that the Leray projector $\pp{P}$ is symmetric, and $\phi$ is a divergence-free function. Hence for any function $h$ we have
$$
\int_{\bb{R}^{3}} <\pp{P}h,\phi>dx = \int_{\bb{R}^{3}} <h,\pp{P}\phi>dx=\int_{\bb{R}^{3}} <h,\phi>dx
$$
Thanks to this property we can derive that
\begin{align*}
&\Big|\int_{0}^{T}\int_{B}\pp{P}[\nabla\cdot P_{\leq N}(P_{\leq N}v\otimes P_{\leq N}v)]\cdot\phi dxdt\Big|\\
=&\Big|\int_{0}^{T}\int_{\bb{R}^{3}}\pp{P}[\nabla\cdot P_{\leq N}(P_{\leq N}v\otimes P_{\leq N}v)]\cdot\phi dxdt\Big|\\
=&\Big|\int_{0}^{T}\int_{\bb{R}^{3}} P_{\leq N}(P_{\leq N}v\otimes P_{\leq N}v):\nabla\phi dxdt\Big|\\
\leq & \int_{0}^{T}\norm{P_{\leq N}v}_{L^{2}_{B}}\norm{P_{\leq N}v}_{L^{3}_{B}}\norm{\nabla\phi}_{L^{6}_{B}}dt\\
\leq& T^{\frac{1}{4}}\norm{P_{\leq N}v}^{\frac{3}{2}}_{L^{\infty}_{t}L^{2}_{B}}\big(\int_{0}^{T}\norm{P_{\leq N}v}^{2}_{L^{6}_{B}}dt\big)^{\frac{1}{4}}
\big(\int_{0}^{T}\norm{\nabla\phi}_{L^{6}_{B}}^{2}dt\big)^{\frac{1}{2}}\\
\leq& C_{B}T^{\frac{1}{4}}\sup_{t\in [0,T]}E(P_{\leq N}v,t)^{2}\norm{\phi}_{L^{2}_{t}\dot{H}^{2}_{B}}.
\end{align*}
With the similar argument, we can get
\begin{align*}
&\Big|\int_{B}\pp{P}[\nabla\cdot P_{\leq N}(P_{\leq N}v\otimes g)]\cdot\phi dx\Big|\leq C_{B}T^{\frac{1}{4}}\sup_{t\in [0,T]}E(P_{\leq N}v,t) \norm{g}_{L^{4}_{t}L^{3}_{B}}\norm{\phi}_{L^{2}_{t}\dot{H}^{2}_{B}},\\
&\Big|\int_{B}\pp{P}[\nabla\cdot P_{\leq N}(g\otimes P_{\leq N}v)]\cdot\phi dx\Big|\leq C_{B}T^{\frac{1}{4}}\sup_{t\in [0,T]}E(P_{\leq N}v,t) \norm{g}_{L^{4}_{t}L^{3}_{B}}\norm{\phi}_{L^{2}_{t}\dot{H}^{2}_{B}},\\
&\Big|\int_{B}\pp{P}[\nabla\cdot P_{\leq N}(g\otimes g)]\cdot\phi dx\Big|\leq C_{B}\norm{g}^{2}_{L^{4}_{t}L^{4}_{B}}\norm{\phi}_{L^{2}_{t}\dot{H}^{1}_{B}}.
\end{align*}

Since $v^{(N)}$ is a solution of the equation $\ref{smooth}$, for any bounded domain $B$ with smooth boundary, we have
\begin{eqnarray}\nonumber
\|\partial_tv^{(N)}\|_{L^2_tH^{-2}(B)} &\leq& \|\Delta P_{\leq N}^{2}v\|_{L^2_tH^{-2}(B)} + \|\nabla\cdot P_{\leq N}(P_{\leq N}v\otimes P_{\leq N}v)\|_{L^2_tH^{-2}(B)}\\\nonumber &&+\ \|\nabla\cdot P_{\leq N}(P_{\leq N}v\otimes g)\|_{L^2_tH^{-2}(B)} + \|\nabla\cdot P_{\leq N}(g\otimes P_{\leq N}v)\|_{L^2_tH^{-2}(B)}\\\nonumber
&&+\ \|\nabla\cdot P_{\leq N}(g\otimes g)]\|_{L^2_tH^{-2}(B)}\\\nonumber
&\leq& C_{B}\big(\sup_{t\in [0,T]}E(P_{\leq N}v,t)+T^{\frac{1}{4}}\sup_{t\in [0,T]}E(P_{\leq N}v,t)^{2}\\\nonumber
&&+\ T^{\frac{1}{4}}\sup_{t\in [0,T]}E(P_{\leq N}v,t) \norm{g}_{L^{4}_{t}L^{3}(B)}+\norm{g}^{2}_{L^4_tL^{4}(B)}\big)\\\nonumber
&&\ \big(\norm{\phi}_{L^2_t\dot{H}^{1}(B)}+\norm{\phi}_{L^2_t\dot{H}^{2}(B)}\big).
\end{eqnarray}
Note that $\norm{g}_{L^{4}_{t}L^{4}_{x}}\leq C\norm{f}_{\dot{H}^{-\alpha}}$, and $\norm{g}_{L^{4}_{t}L^{3}(B)}\leq C_{B}\norm{g}_{L^{4}_{t}L^{4}(B)}$. By the energy estimate $(\ref{eq:vn})$, we derive that
\begin{equation}
\|\partial_tv^{(N)}\|_{L^2_tH^{-2}(B)} \leq C_{B}(\alpha, \norm{f}_{\dot{H}^{\alpha}},T^{\frac{1}{4}})\ ,\quad \forall\ t\in(0,T].
\end{equation}
This estimate implies that
$$
\partial_{t}v^{(N)}\in L^{2}([0,T];H^{-2}_{\rm loc}).
$$
Hence, by the standard compactness argument, see Temam $\cite{TER}$. one has
\begin{equation}\label{convergence}
v^{(N)} \rightarrow v\ \ {\rm in}\ C([0, t], H^{-2}_{\rm loc}),\quad v^{(N)} \rightharpoonup v\ \ {\rm in}\ \pp{E}(t)\ ,\quad \forall\ t\in(0,T].
\end{equation}
Moreover, since $\norm{v^{(N)}}_{\pp{E}(t)}$ is uniform bounded, we have
$$
\norm{v}_{\pp{E}(t)}\leq C(\alpha,\norm{f}_{\dot{H}^{-\alpha}}),\quad \forall\ t\in(0,T].
$$\\

Next we prove an interpolation inequality:
\begin{equation}
\label{ineq:interpolation}
\|v^{(N)}-v\|_{L^{2}([0, \delta], L^{2}_{\rm loc})}\leq C\|v^{(N)}-v\|_{L^{2}([0, \delta], H^{-2}_{\rm loc})}^{\frac{1}{3}}\|v^{(N)}-v\|_{L^{2}([0, \delta], \dot{H}^{1})}^{\frac{2}{3}}.
\end{equation}
To show this inequality, take $B$ be an arbitrary bounded domain with smooth boundary, and define
$$
\phi = \begin{cases}
1\quad (x\in B),\\
0\quad (x\in (2B)^{c}).
\end{cases}
$$
Then by the interpolation theorem for the whole space, we have
\begin{align*}
\|v^{(N)}-v\|_{L^{2}(B)}&=\|(v^{(N)}-v)\phi\|_{L^{2}(\bb{R}^{3})}\leq \|(v^{(N)}-v)\phi\|_{\dot{H}^{-2}(\bb{R}^{3})}^{\frac{1}{3}}\|(v^{(N)}-v)\phi\|_{\dot{H}^{1}(\bb{R}^{3})}^{\frac{2}{3}}\\
&\leq \|(v^{(N)}-v)\phi\|_{\dot{H}^{-2}(2B)}^{\frac{1}{3}}\Big(\|v^{(N)}-v\|_{\dot{H}^{1}(2B)} +\|v^{(N)}-v\|_{L^{2}(2B)}\Big)^{\frac{2}{3}}\\
&\leq \|(v^{(N)}-v)\phi\|_{\dot{H}^{-2}(2B)}^{\frac{1}{3}}\Big(\|v^{(N)}-v\|_{\dot{H}^{1}(2B)} +C_{B}\|v^{(N)}-v\|_{L^{6}(2B)}\Big)^{\frac{2}{3}}\\
&\leq \|(v^{(N)}-v)\phi\|_{\dot{H}^{-2}(2B)}^{\frac{1}{3}}\|v^{(N)}-v\|_{\dot{H}^{1}(\bb{R}^{3})}^{\frac{2}{3}}
\end{align*}
Note that for any $\psi\in H^{2}_{0}(2B)$, we have
\begin{align*}
\int_{2B}(v^{(N)}-v)\phi\psi dx \leq \|v^{(N)}-v\|_{\dot{H}^{-2}(2B)}\norm{\phi\psi}_{\dot{H}^{2}(2B)}\leq C\|v^{(N)}-v\|_{\dot{H}^{-2}(2B)}.
\end{align*}
Integral with $t$,
\begin{align*}
&\int_{0}^{T}\|v^{(N)}-v\|^{2}_{L^{2}(B)}dt\leq C\int_{0}^{T}\|v^{(N)}-v\|_{\dot{H}^{-2}(2B)}^{\frac{2}{3}}\|v^{(N)}-v\|_{\dot{H}^{1}(\bb{R}^{3})}^{\frac{4}{3}}dt\\
&\leq C\|v^{(N)}-v\|_{L^{2}_{t}\dot{H}^{-2}(2B)}^{\frac{2}{3}}\|v^{(N)}-v\|_{L^{2}_{t}\dot{H}^{1}(\bb{R}^{3})}^{\frac{4}{3}}.
\end{align*}
Thus the inequality $(\ref{ineq:interpolation})$ follows.\\

Hence, using the $(\ref{convergence})$, we have
\begin{equation}\label{convergence-1}
\begin{aligned}
&\|v^{(N)} - v\|_{L^3_t(L^3_{\rm loc})} \leq C\|v^{(N)} - v\|_{L^2_t(L^2_{\rm loc})}^{\frac{1}{6}}\|v^{(N)} - v\|_{L^{\frac{10}{3}}_{t}(L^{\frac{10}{3}}_{x})}^{\frac{5}{6}}\\
&\leq C\|v^{(N)} - v\|_{L^2_t(L^2_{\rm loc})}^{\frac{1}{6}}\|v^{(N)} - v\|_{\pp{E}(t)}^{\frac{5}{6}}\\
&\longrightarrow 0\ {\rm as}\ N\to \infty.
\end{aligned}
\end{equation}
As a result, using $(\ref{convergence})$ and $(\ref{convergence-1})$, for any $\phi \in C_0^\infty(\mathbb{R}_+\times\mathbb{R}^3)$ with $\nabla\cdot \phi = 0$, one may take the limit $N \to \infty$ in the following identity
\begin{align*}
&\iint\partial_{t}\phi v^{(N)}+\iint\Delta\phi P_{\leq N}^{2}v^{(N)}\\
&=-\iint[P_{\leq N}(P_{\leq N}v^{(N)}\otimes P_{\leq N}v^{(N)}):\nabla\phi+ P_{\leq N}(P_{\leq N}v^{(N)}\otimes g):\nabla\phi\\ &+P_{\leq N}(g\otimes P_{\leq N}v^{(N)}):\nabla\phi+P_{\leq N}(g\otimes g):\nabla\phi],
\end{align*}
to derive that
\begin{align*}
&\iint\partial_{t}\phi v+\iint\Delta\phi v\\
&=-\iint[v\otimes v:\nabla\phi+ (v\otimes g):\nabla\phi\\ &+(g\otimes v):\nabla\phi+(g\otimes g):\nabla\phi].
\end{align*}
This shows that $v$ is a weak solution to $(\ref{eq:nonlinear})$ on $[0,T]$. In particular, $T$ can be taken as $\delta$ and thus we get the local weak solution.

\section*{Acknowledgement}
The authors greatly thank Prof. Zhen Lei for many constructive discussions. The authors were in part supported by NSFC (grants No. 11171072, 11421061, 11222107 and 11301338), Shanghai Shu Guang project, Shanghai Talent Development Fund and SGST 09DZ2272900.

\end{document}